\def\ps@pprintTitle{%
    \let\@oddhead\@empty
    \let\@evenhead\@empty
    \let\@oddfoot\@empty
    \let\@evenfoot\@oddfoot}
\newtheorem{theorem}{Theorem}[section]
\newtheorem{definition}{Definition}[section]   % 定义
\newtheorem{lemma}{Lemma}[section]      % 引理
\newtheorem{assumption}{Assumption}[section]
\newtheorem{remark}{Remark}[section]
\journal{Communications in Nonlinear Science and Numerical Simulation}
\begin{document}

\begin{frontmatter}

%% Title, authors and addresses

%% use the tnoteref command within \title for footnotes;
%% use the tnotetext command for theassociated footnote;
%% use the fnref command within \author or \address for footnotes;
%% use the fntext command for theassociated footnote;
%% use the corref command within \author for corresponding author footnotes;
%% use the cortext command for theassociated footnote;
%% use the ead command for the email address,
%% and the form \ead[url] for the home page:
%% \title{Title\tnoteref{label1}}
%% \tnotetext[label1]{}
%% \author{Name\corref{cor1}\fnref{label2}}
%% \ead{email address}
%% \ead[url]{home page}
%% \fntext[label2]{}
%% \cortext[cor1]{}
%% \affiliation{organization={},
%%             addressline={},
%%             city={},
%%             postcode={},
%%             state={},
%%             country={}}
%% \fntext[label3]{}

\title{Incorporating Local H\"older Regularity into PINNs for Solving Elliptic PDEs
}

%% use optional labels to link authors explicitly to addresses:
%% \author[label1,label2]{}
%% \affiliation[label1]{organization={},
%%             addressline={},
%%             city={},
%%             postcode={},
%%             state={},
%%             country={}}
%%
%% \affiliation[label2]{organization={},
%%             addressline={},
%%             city={},
%%             postcode={},
%%             state={},
%%             country={}}

\author[inst1]{Qirui Zhou}
\author[inst1]{Jiebao Sun}
\author[inst1]{Yi Ran\corref{corresponding}}
% \cormark[1]
\ead{21b912025@stu.hit.edu.cn}
\author[inst1]{Boying Wu}

\cortext[corresponding]{Corresponding author.}
% \cormark[1]
% \ead{z.edu.cn}

\affiliation[inst1]{organization={School of Mathematics},
            addressline={Harbin Institute of Technology}, 
            city={Harin},
            postcode={150001}, 
            % state={State One},
            country={China}}

% \author[inst2]{Author Two}
% \author[inst1,inst2]{Author Three}

% \affiliation[inst2]{organization={Department Two},%Department and Organization
%             addressline={Address Two}, 
%             city={City Two},
%             postcode={22222}, 
%             state={State Two},
%             country={Country Two}}

\begin{abstract}
%% Text of abstract
In this paper, local H\"older regularization is incorporated into a physics-informed neural networks (PINNs) framework for solving elliptic partial differential equations (PDEs). Motivated by the interior regularity properties of linear elliptic PDEs, a modified loss function is constructed by introducing local H\"older regularization term. To approximate this term effectively, a variable-distance discrete sampling strategy is developed. Error estimates are established to assess the generalization performance of the proposed method. Numerical experiments on a range of elliptic problems demonstrate notable improvements in both prediction accuracy and robustness compared to standard physics-informed neural networks.
\end{abstract}

%%Graphical abstract
% \begin{graphicalabstract}
% \includegraphics{grabs}
% \end{graphicalabstract}

%%Research highlights
% \begin{highlights}
% \item Research highlight 1
% \item Research highlight 2
% \end{highlights}

\begin{keyword}
%% keywords here, in the form: keyword \sep keyword
Physics-informed neural networks\sep Local H\"older regularization\sep Elliptic partial differential equations\sep Numerical simulation
%% PACS codes here, in the form: \PACS code \sep code
% \PACS 0000 \sep 1111
% %% MSC codes here, in the form: \MSC code \sep code
% %% or \MSC[2008] code \sep code (2000 is the default)
% \MSC 0000 \sep 1111
\end{keyword}

\end{frontmatter}

\section{Introduction}
\label{intro}
Deep neural networks (DNNs) have recently shown great potential for solving partial differential equations (PDEs), and the theoretical foundation lies in the Universal Approximation Theorem \cite{cybenko1989approximation,hornik1989multilayer} guarantees the solvability of PDEs. Numerous deep neural network-based methods have been proposed for solving PDEs \cite{raissi2019physics,sirignano2018dgm,weinan2018deep,kharazmi2019variational}, each based on different formulations or representations of the equations.

A prominent category of deep learning methods for solving PDEs is built upon their strong form, where the residuals of the governing equations are directly enforced.
Among these, the Physics-Informed Neural Networks (PINNs) framework proposed by Raissi et al. \cite{raissi2019physics} has gained considerable attention.
This approach embeds the physical laws governing the system into the loss function, enabling training without requiring labeled data for the solution. This formulation provides a flexible framework for solving forward and inverse problems in a data-efficient manner. Numerous extensions to the PINNs framework have been proposed to enhance optimization and scalability.
For example, adaptive activation functions have been introduced to improve training stability \cite{jagtap2020adaptive}, and gradient-informed objectives have been shown to accelerate convergence \cite{yu2022gradient}.
To address the computational bottleneck in long-time integration of time-dependent PDEs, Meng et al. \cite{meng2020ppinn} proposed the Parareal PINN (PPINN), which decomposes the temporal domain into parallelizable short-time subproblems guided by a coarse-grained solver.
In addition, libraries such as DeepXDE \cite{lu2021deepxde} provide a unified and extensible interface for implementing such physics-informed models. Another representative strong-form method is the Deep Galerkin Method (DGM) \cite{sirignano2018dgm}, which formulates the loss function as the integral of squared PDEs residuals approximated by Monte Carlo sampling.

Another representative class of approaches leverages the weak formulation of PDEs. The Deep Ritz Method, introduced by E et al. \cite{weinan2018deep}, minimizes the energy functional associated with the target equation, offering a natural fit for elliptic problems. Extending the variational framework, Variational Physics-Informed Neural Networks (VPINNs) \cite{kharazmi2019variational} and their refinement hp-VPINNs \cite{kharazmi2021hp} adopt the Petrov–Galerkin formulation, allowing for localized basis functions and adaptive resolution. Furthermore, the Weak Adversarial Network (WAN) \cite{zang2020weak} recasts the weak form as a min–max optimization and solves it via adversarial learning, enabling unified treatment of forward and inverse problems. These approaches demonstrate the capacity of weak formulations to incorporate diverse boundary conditions and complex solution structures.

The strong-form and weak-form methods are typically designed for solving individual PDEs instances, which limits their ability to generalize across structurally similar PDEs. To overcome this limitation, operator learning has emerged as a powerful paradigm that aims to learn mappings between entire function spaces, such as parameter spaces to solution spaces, thereby enabling the efficient solution of whole classes of PDEs sharing common structures. Numerous neural architectures have been proposed to realize operator learning, including DeepONet \cite{lu2021learning}, Fourier Neural Operator (FNO) \cite{li2020fourier}, U-shaped neural operators (U-NO) \cite{rahman2022u}, and U-FNO \cite{wen2022u}. 
These methods leverage data-driven training to capture complex nonlinear mappings, offering significant speed-ups in evaluating PDEs solutions for new inputs compared to traditional solvers. Despite these advantages, operator learning methods still face challenges such as relatively slow convergence during training and considerable computational costs.

Despite their remarkable performance, DNNs remain highly vulnerable to adversarial perturbations—subtle changes to the input that can cause the model to make incorrect predictions with high confidence \cite{goodfellow2014explaining}. A widely adopted strategy to improve robustness is adversarial training, which incorporates adversarial examples into the training process to enhance model stability \cite{madry2017towards, shafahi2019adversarial}. While this approach has demonstrated effectiveness, it often comes at the cost of increased computational burden and a tendency to overfit specific types of attacks.

Alternatively, regularization-based methods aim to improve robustness by constraining the capacity of neural networks, thereby preventing them from overfitting to adversarial noise or spurious patterns. A common strategy is to impose norm-based penalties on network parameters to promote sparsity or weight decay, which can lead to smoother decision boundaries \cite{gouk2021regularisation}. Beyond standard weight regularization, several works have explored more targeted techniques, including Jacobian regularization \cite{hoffman2019robust, jakubovitz2018improving}, which penalizes the sensitivity of the model output with respect to input perturbations, and Lipschitz regularization \cite{tsuzuku2018lipschitz, miyato2018spectral}, which enforces constraints on the Lipschitz constant of the model to limit output variation under input changes. Despite their conceptual simplicity, regularization-based approaches face several practical challenges. Designing effective regularization terms often requires a deep understanding of the underlying data and model behavior, while tuning the associated hyperparameters can be computationally intensive and highly task-dependent. Moreover, overly aggressive regularization may impair the expressiveness of the model, leading to underfitting and degraded performance on clean or mildly perturbed inputs. These trade-offs highlight the need for carefully balanced regularization schemes tailored to specific robustness objectives.

To address the limitations of existing regularization methods, such as lack of interpretability, limited theoretical support, and sensitivity to hyperparameter choices, approaches that are both mathematically rigorous and practically effective are increasingly being needed. Motivated by the interior regularity of weak solutions to linear elliptic partial differential equations, a novel regularization strategy is proposed based on the local H\"older continuity of the network output. Within this framework, the local H\"older seminorm is rigorously defined on neighborhoods centered at selected training points to quantitatively characterize spatial smoothness. To efficiently and accurately approximate the seminorm, a variable-distance sampling scheme is introduced. Furthermore, the framework supports flexible sampling strategies and hyperparameter tuning, enhancing robustness and adaptability in practical applications. In summary, this strategy bridges classical PDEs regularity theory for weak solutions with modern neural PDEs solvers, providing a theoretically principled and practically effective regularization paradigm.

The rest of this paper is organized as follows. In Section \ref{Preliminaries}, the interior regularity theory and the H\"older norm for divergence-form elliptic partial differential equations are reviewed. Section \ref{Methodology} introduces the proposed framework based on the local H\"older seminorm, with an emphasis on a variable-distance sampling scheme for discrete seminorm approximation. The algorithm and error estimate are also established in Section \ref{Methodology}. Numerical experiments validating the effectiveness of the proposed model and investigating the influence of key hyperparameters are presented in Section \ref{NumExp}. Finally, a brief summary and discussion are given in Section \ref{Summary}.

\section{Preliminaries}\label{Preliminaries}
\subsection{Weak Solution and Interior Regularity}
In this subsection, we mainly provide some existing regularization theories for the weak solution of divergence form elliptic PDEs. Consider a general boundary value problem

\begin{equation}
  \left\{\begin{aligned}
  &L u=f  \text { in } \Omega, \\
  &u=0  \text { on } \partial \Omega,
  \end{aligned}\right.
  \label{Equation:1} 
\end{equation}
where $\Omega$ is an open, bounded subset of $\mathbb{R}^n, u=u(x)$ is defined on $\Omega, f\in L^2(\Omega)$, and $L$ denotes a second-order partial differential operator having the divergence form
\begin{equation}
  L u=-\sum_{i, j=1}^n\left(a^{i j}(x) u_{x_i}\right)_{x_j}+\sum_{i=1}^n b^i(x) u_{x_i}+c(x) u
  \label{Equation:2}
\end{equation}
for given coefficient functions $a^{i j}, b^i, c(i, j=1, \ldots, n)$. 

The partial differential operator $L$ is uniformly elliptic if there exists a constant $\mu>0$ such that
$$
\sum_{i, j=1}^n a^{i j}(x) \xi_i \xi_j \geq \mu|\xi|^2
$$
holds, for a.e. $x \in \Omega$ and all $\xi \in \mathbb{R}^n$. Uniformly ellipticity implies that, for each point $x \in \Omega$, the symmetric $n \times n$ matrix function $\mathbf{A}(x)=\left(a^{i j}(x)\right)$ is positive definite, with the smallest eigenvalue bounded below by $\mu$.

Since classical solutions may not exist, the concept of weak solutions is considered instead. The weak solution of Equation (\ref{Equation:1}) is defined as follows.

\begin{definition}
(Weak Solution) The bilinear form $B[\cdot,\cdot]$ associated with the divergence form elliptic operator $L$ defined by (\ref{Equation:2}) is
  \begin{equation}
    B[u, v] := \int_{\Omega} \left( \sum_{i,j=1}^n a^{ij} u_{x_i} v_{x_j} + \sum_{i=1}^n b^i u_{x_i} v + c u v \right) dx
  \end{equation}
  for $u, v \in H_0^1(\Omega)$. Then $u \in H_0^1(\Omega)$ is a weak solution of the boundary-value problem (\ref{Equation:1}) if
  \begin{equation}
    B[u, v] = (f, v)
  \end{equation}
  for all $v \in H_0^1(\Omega)$, where $(\cdot,\cdot)$ denotes the inner product in $L^2(\Omega)$.
\end{definition}
 
Suppose $u \in H_0^1(\Omega)$ is a weak solution of the Equations (\ref{Equation:1}), where $L$ has the divergence form. Then the following result holds.

\begin{theorem}
    (Interior regularity \cite{evans2022partial}) Let $k$ be a nonnegative integer, and assume
      $$
      a^{i j}, b^i, c \in C^{k+1}(\Omega) \quad(i, j=1, \ldots, n)
      $$
      and
      $$
      f \in H^k(\Omega) .
      $$   
    Suppose furthermore that $u \in H^1(\Omega)$ is a weak solution of the elliptic PDE
    $$
    L u=f \quad \text { in } \Omega \text {. }
    $$
    Then
    $$
    u \in H_{\text {loc }}^{k+2}(\Omega) ;
    $$
    and for each open subset $\Omega^{\prime} \subseteq \Omega$ the estimate 
    $$
    \|u\|_{H^{k+2}(\Omega^{\prime})} \leq C\left(\|f\|_{H^k(\Omega)}+\|u\|_{L^2(\Omega)}\right)
    $$
    holds, where the constant $C$ depending only on $k, \Omega^{\prime}, \Omega$, and the coefficients of $L$.
    \label{Theorem:1}
\end{theorem}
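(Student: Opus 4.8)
The plan is to run the classical difference-quotient argument for the base case $k=0$ and then bootstrap to general $k$ by induction. For the base case, fix open sets $\Omega' \Subset \Omega'' \Subset \Omega$ and a cutoff $\zeta \in C_c^\infty(\Omega'')$ with $\zeta \equiv 1$ on $\Omega'$ and $0 \le \zeta \le 1$. For small $h$ and a coordinate direction $e_\ell$, I would insert the test function $v = -D_\ell^{-h}\!\big(\zeta^2 D_\ell^h u\big)$ into the weak identity $B[u,v] = (f,v)$, where $D_\ell^{\pm h}$ denotes the difference quotient. Transferring the difference quotient onto $u$ in the principal part and using uniform ellipticity yields a term $\mu\!\int \zeta^2 |D_\ell^h Du|^2$, while the remaining contributions (those carrying a factor $\zeta|\nabla\zeta|$ from differentiating the cutoff, and those from the lower-order coefficients $b^i,c$) are controlled by Cauchy–Schwarz and Young's inequality and absorbed into the left-hand side. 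This gives a bound $\mu\!\int \zeta^2 |D_\ell^h Du|^2 \le C\big(\|f\|_{L^2(\Omega)}^2 + \|u\|_{H^1(\Omega'')}^2\big)$ uniform in $h$; since $u \in H^1$, letting $h \to 0$ shows $D_\ell Du \in L^2_{\mathrm{loc}}$ for every $\ell$, hence $u \in H^2_{\mathrm{loc}}(\Omega)$.

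Next I would rewrite $Lu = f$ pointwise a.e., which expresses $\sum_{i,j} a^{ij} u_{x_i x_j}$ in terms of $f$ and terms of order $\le 1$; a second, pointwise use of ellipticity of the matrix $\mathbf{A}(x)$ upgrades the previous bound to the full estimate $\|u\|_{H^2(\Omega')} \le C\big(\|f\|_{L^2(\Omega)} + \|u\|_{H^1(\Omega'')}\big)$. Finally, the Caccioppoli (energy) inequality obtained by testing with $\zeta^2 u$ bounds $\|u\|_{H^1(\Omega'')}$ by $\|f\|_{L^2(\Omega)} + \|u\|_{L^2(\Omega)}$, which closes the base case with the stated dependence of the constant.

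For the inductive step, assume the theorem for $k-1$. Fix a chain $\Omega' = \Omega_0 \Subset \Omega_1 \Subset \cdots \Subset \Omega_k \Subset \Omega$ at the outset. For each multi-index $\alpha$ with $|\alpha| = k$ set $\tilde u := D^\alpha u$; differentiating the weak formulation (via difference quotients, using that $u \in H^{k+1}_{\mathrm{loc}}$ is already available from the induction) shows $\tilde u$ is a weak solution of $L\tilde u = \tilde f$ on a slightly smaller domain, where $\tilde f = D^\alpha f - \sum_{\beta < \alpha} c_{\alpha\beta}\,(\cdots)$ collects terms involving derivatives of order $\le k+1$ of $u$ and of order $\le k+1$ of the coefficients. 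The hypotheses $a^{ij},b^i,c \in C^{k+1}$ and $f \in H^k$, together with the inductive estimate on $\Omega_k$, give $\tilde f \in L^2_{\mathrm{loc}}$ with $\|\tilde f\|_{L^2(\Omega_k)} \le C\big(\|f\|_{H^k(\Omega)} + \|u\|_{L^2(\Omega)}\big)$. Applying the base case to $\tilde u$ yields $\tilde u \in H^2_{\mathrm{loc}}$, i.e. $u \in H^{k+2}_{\mathrm{loc}}$; summing the resulting estimates over all $|\alpha| = k$ and adding the inductive bound for the lower-order norms gives $\|u\|_{H^{k+2}(\Omega')} \le C\big(\|f\|_{H^k(\Omega)} + \|u\|_{L^2(\Omega)}\big)$ with $C$ depending only on $k$, $\Omega'$, $\Omega$, and the coefficients of $L$.

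I expect the main obstacle to be the base case: making the difference-quotient computation rigorous while tracking the nested cutoffs, and organizing the Young's-inequality absorption so that every term containing $\zeta D_\ell^h Du$ is genuinely dominated by $\tfrac{\mu}{2}\!\int \zeta^2 |D_\ell^h Du|^2$ using only the ellipticity constant. A secondary bookkeeping difficulty is that each invocation of the base case inside the induction shrinks the domain, so the finite chain $\Omega_0 \Subset \cdots \Subset \Omega_k \Subset \Omega$ must be fixed in advance and the compounding of constants along it monitored explicitly.
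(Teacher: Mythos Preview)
Your proposal is correct and follows exactly the standard difference-quotient/induction argument from Evans, which is the reference cited for this theorem; the paper itself does not supply a proof but simply quotes the result from \cite{evans2022partial}, so there is nothing further to compare.
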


\begin{theorem}
    If $f \in H^k(\Omega) $ for $ k>\frac{n}{2}$, and $u$ is the weak solution of Equation (\ref{Equation:1}). Then for each open subset $\Omega^{\prime} \subseteq \Omega$, we claim that $u\in C^{2,\alpha}(\Omega^{\prime})$, where $0<\alpha \leqslant 1-\frac{n}{2 k}$.
    \label{lemma1}
\end{theorem}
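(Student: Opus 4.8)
\noindent The plan is to derive the $C^{2,\alpha}$ bound by chaining the interior Sobolev regularity of Theorem~\ref{Theorem:1} with a Sobolev--Morrey embedding, the exponent $\alpha=1-\tfrac{n}{2k}$ being exactly what that embedding yields once one notices that the hypothesis $k>\tfrac n2$ is the same as $2k>n$, so that $L^{2k}$ is the natural integrability scale on which Morrey's inequality produces H\"older continuity. Schematically: Theorem~\ref{Theorem:1} raises the weak solution from $H^1$ to $H^{k+2}_{\mathrm{loc}}$; the Sobolev embedding trades the surplus $k-1$ derivatives for integrability, placing $u$ in $W^{3,2k}_{\mathrm{loc}}$; and Morrey's inequality for $W^{1,p}$ with $p=2k>n$, applied componentwise to $D^2u$, gives $D^2u\in C^{0,\alpha}_{\mathrm{loc}}$ with $\alpha=1-\tfrac n{2k}$.

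\noindent Concretely I would argue as follows. Fix $\Omega'\Subset\Omega''\Subset\Omega$ with $\Omega''$ of class $C^1$ (e.g.\ a finite union of balls), so that the Sobolev and Morrey embedding theorems apply on $\Omega''$. Apply Theorem~\ref{Theorem:1} on $\Omega''$ — using the standing hypotheses $a^{ij},b^i,c\in C^{k+1}(\Omega)$ and $f\in H^k(\Omega)$ — to get $u\in H^{k+2}(\Omega'')$ with the accompanying interior estimate. Next invoke $H^{k+2}(\Omega'')=W^{k+2,2}(\Omega'')\hookrightarrow W^{3,2k}(\Omega'')$: removing $k-1$ derivatives, this requires $\tfrac{1}{2k}\ge\tfrac12-\tfrac{k-1}{n}$, which rearranges to $(k-1)(2k-n)\ge 0$ and therefore holds precisely because $k\ge 1$ and $k\ge\tfrac n2$ (in the borderline case $k-1=\tfrac n2$ the embedding holds into every finite Lebesgue exponent, in particular $2k$). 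Finally, since $2k>n$, Morrey's inequality gives $W^{3,2k}(\Omega'')\hookrightarrow C^{2,1-n/(2k)}(\overline{\Omega''})$, whence $u\in C^{2,\alpha}(\Omega')$ for $\alpha=1-\tfrac n{2k}$, and a fortiori for every smaller positive $\alpha$. An alternative that bypasses Theorem~\ref{Theorem:1} for $u$: the same Sobolev--Morrey chain applied to $f$ yields $f\in C^{0,\alpha}(\overline{\Omega''})$, and after expanding $L$ into the nondivergence form $-a^{ij}u_{x_ix_j}+(b^i-a^{ij}_{x_j})u_{x_i}+cu=f$ (legitimate because $a^{ij}\in C^{k+1}$) with H\"older coefficients, the interior Schauder estimate upgrades the solution — already classical by Theorem~\ref{Theorem:1} — to $u\in C^{2,\alpha}_{\mathrm{loc}}(\Omega)$.

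\noindent The points needing care, rather than genuine obstacles: the statement is local, so Theorem~\ref{Theorem:1} and the embeddings are used on a subdomain, and the interposed $C^1$ domain $\Omega''$ is necessary because an arbitrary open $\Omega'\Subset\Omega$ need not be a Sobolev extension domain; the critical Sobolev case $k-1=\tfrac n2$ must be treated separately as above; and, if the Schauder route is used, one must verify that the coefficients of the nondivergence form land in a H\"older class (they do, being $C^{k}\subset C^{0,\alpha}$ locally). The only slightly delicate bookkeeping is checking that the single inequality $\tfrac{1}{2k}\ge\tfrac12-\tfrac{k-1}{n}$, i.e.\ $(k-1)(2k-n)\ge0$, is exactly equivalent to the hypothesis — this is what makes the stated exponent $1-\tfrac n{2k}$ (rather than the sharper $k-\tfrac n2$ one would obtain by using the critical Sobolev exponent) drop out; everything else is a routine application of standard interior-regularity and embedding theorems.
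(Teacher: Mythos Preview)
Your main argument is correct and follows exactly the paper's own proof: apply Theorem~\ref{Theorem:1} to get $u\in H^{k+2}_{\mathrm{loc}}(\Omega)$, then invoke the Sobolev embedding $H^{k+2}(\Omega')\hookrightarrow C^{2,\alpha}(\Omega')$ for $\alpha\le 1-\tfrac{n}{2k}$. The paper's proof is literally those two lines; you unpack the embedding into a Gagliardo--Nirenberg step $W^{k+2,2}\hookrightarrow W^{3,2k}$ followed by Morrey, interpose a $C^1$ subdomain so the embedding theorems are justified, and correctly note that the stated exponent is weaker than the sharp $\min(1,k-\tfrac n2)$ --- all care the paper omits, but the route is identical (your Schauder alternative is a genuinely different path the paper does not take).
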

\begin{proof}
  According to Theorem \ref{Theorem:1}, for any open subset $\Omega^{\prime} \subseteq \Omega, u \in H^{k+2}(\Omega^{\prime})$. Based on Sobolev Embedding Theorem \cite{evans2022partial},
  when $k>\frac{n}{2}, H^{k+2}(\Omega^{\prime}) \hookrightarrow C^{2,\alpha}(\Omega^{\prime})$, where $0<\alpha \leqslant 1-\frac{n}{2 k}.$
\end{proof}

When $a^{i j} \equiv \delta_{i j}, b^i \equiv 0, c \equiv 0$, the operator $L = -\Delta$. Then equation (\ref{Equation:1}) degenerates to the Poisson equation
\begin{equation}\nonumber
  \left\{\begin{aligned}
  &- \Delta u=f  \text { in } \Omega, \\
  &u=0  \text { on } \partial \Omega.
  \end{aligned}\right.
  \label{Equation:4}
\end{equation}
When $f \in C^{\infty}$, it follows that the solution $u\in C^{\alpha}$ for some $\alpha \in (0,1)$. This prior regularity implies that the neural network-based approximate solution is also expected to exhibit H\"older continuity.

\subsection{Property of H\"older Seminorm}
In this subsection, following the framework in \cite{krylov1996lectures}, we define the global H\"older seminorm and the local H\"older seminorm of the network mapping $u_\theta(x): \Omega \rightarrow \mathbb{R}^m$ as follows.

\begin{definition}%\cite{krylov1996lectures}
  Let $u_\theta(x):\Omega \to \mathbb{R}^m$ be a neural network mapping parameterized by
  % function on $\Omega \subset \mathbb{R}^n
  $\theta, 0<\alpha <1, u_\theta(x)$ is H\"older continuous with exponent $\alpha$, if 
$$|u_\theta(x)-u_\theta(y)| \leq C|x-y|^\alpha, \quad x, y \in \Omega,$$
for a constant $C$. Then the global H\"older seminorm of $u_\theta(x)$ is defined as
$$
[u_\theta]_{\alpha ; \Omega}=\sup _{x, y \in \Omega, x \neq y} \frac{|u_\theta(x)-u_\theta(y)|}{|x-y|^\alpha}.
$$
The global H\"older norm is
$$
\Vert u_\theta \Vert_{\alpha ; \Omega} = \sup_{x \in \Omega} |u_\theta(x)| + \sup _{x, y \in \Omega, x \neq y} \frac{|u_\theta(x)-u_\theta(y)|}{|x-y|^\alpha}.
$$
% the H\"older norm of $u$.
Here, $|\cdot |$ represents the Euclidean norm of a vector.
\label{Definition:1}
\end{definition}
However, the computation of the global H\"older norm is relatively complex. To address this, the local H\"older seminorm is introduced to replace it.

\begin{definition}%\cite{krylov1996lectures}
  $u_\theta(x)$ is local H\"older continuous with exponent $\alpha$ at point $x$, if
$$
|u_\theta(x)-u_\theta(y)| \leq C|x-y|^\alpha, \quad y \in \Omega_\rho(x),
$$
for a constant $C$, where $\Omega_\rho(x)=\Omega \cap B_\rho(x)$, and $B_\rho(x)$ is a ball of radius $\rho$ that does not contain the center $x$. The local H\"older seminorm of $u_\theta(x)$ at $x$ is given by
$$
[u_\theta]_{\alpha ; \Omega}^{l o c}(x)=\int_{\Omega_\rho(x)} \frac{|u_\theta(x)-u_\theta(y)|}{|x-y|^\alpha} \rm{d} y .
$$
\label{Definition:2}
\end{definition}

Nevertheless, to further ease the calculation and facilitate subsequent work, we provide a sufficient condition for local H\"older continuity.
\begin{lemma}
  Assume that $u_\theta(x):\Omega \mapsto \mathbb{R}^m$ is bounded. If
  $$\sup_{y \in \Omega_\rho(x)} \frac{|u_\theta(x)-u_\theta(y)|}{|x-y|^\alpha}<+\infty,$$
  then the local H\"older seminorm of function $u$ at point $x$ 
  $$\int_{\Omega_\rho(x)} \frac{|u_\theta(x)-u_\theta(y)|}{|x-y|^\alpha} \rm{d} y$$
  exists.
\label{lemma2}
\end{lemma}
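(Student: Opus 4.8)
The plan is to reduce the existence of the integral to a boundedness-times-finite-measure estimate, so that no genuine singular-integral analysis is needed. First I would observe that the integrand
$$
g(y):=\frac{|u_\theta(x)-u_\theta(y)|}{|x-y|^\alpha}
$$
is nonnegative and measurable on $\Omega_\rho(x)$: the network map $u_\theta$ is a composition of affine maps and continuous activation functions, hence continuous, while on $\Omega_\rho(x)$ the denominator never vanishes because $B_\rho(x)$ excludes its center; thus $g$ is in fact continuous on the open set $\Omega_\rho(x)$. Consequently the Lebesgue integral $\int_{\Omega_\rho(x)} g(y)\,\mathrm{d}y$ is well defined as an element of $[0,+\infty]$, and the whole task is to exclude the value $+\infty$.

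Next I would bring in the hypothesis. Set $M:=\sup_{y\in\Omega_\rho(x)}g(y)$, which is finite by assumption. Boundedness of $u_\theta$ ensures the numerator of $g$ is finite everywhere on $\Omega_\rho(x)$, so the only mechanism by which $g$ could be unbounded is a blow-up as $y\to x$, and finiteness of $M$ is exactly the statement that this does not occur. Hence $0\le g(y)\le M$ for every $y\in\Omega_\rho(x)$.

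Finally I would use the geometry of the domain. Since $\Omega_\rho(x)=\Omega\cap B_\rho(x)\subseteq B_\rho(x)$, its Lebesgue measure satisfies $|\Omega_\rho(x)|\le|B_\rho(x)|=\omega_n\rho^n<+\infty$, where $\omega_n$ is the volume of the unit ball in $\mathbb{R}^n$. Combining the pointwise bound with the measure bound,
$$
\int_{\Omega_\rho(x)}\frac{|u_\theta(x)-u_\theta(y)|}{|x-y|^\alpha}\,\mathrm{d}y\;\le\;M\,|\Omega_\rho(x)|\;\le\;M\,\omega_n\rho^n\;<\;+\infty,
$$
so the local H\"older seminorm of $u_\theta$ at $x$ exists.

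I do not anticipate a real obstacle: the only point requiring care is the potential singularity of $g$ at $y=x$, which the supremum hypothesis is precisely designed to neutralize, together with the (routine) verification that $g$ is measurable. If one preferred not to invoke continuity of the activations, one could instead note directly that $g$ is a quotient of measurable functions with nonvanishing denominator on $\Omega_\rho(x)$, which suffices for measurability and hence for the estimate above.
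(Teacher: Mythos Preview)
Your proposal is correct and follows essentially the same route as the paper: name the supremum $M$, bound the integrand pointwise by $M$, and conclude via $\int g\le M\,|\Omega_\rho(x)|<\infty$. The additional remarks on measurability and the explicit volume bound $|\Omega_\rho(x)|\le \omega_n\rho^n$ are harmless embellishments of the same argument.
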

\begin{proof}
Since the supremum
$$
\sup_{y \in \Omega_\rho(x)} \frac{|u_\theta(x) - u_\theta(y)|}{|x - y|^\alpha} = M < +\infty,
$$
there exists a constant $M > 0$ such that for all $y \in \Omega_\rho(x)$,
$$
\frac{|u_\theta(x) - u_\theta(y)|}{|x - y|^\alpha} \leq M.
$$
Consequently, the integral can be bounded by
$$
\int_{\Omega_\rho(x)} \frac{|u_\theta(x) - u_\theta(y)|}{|x - y|^\alpha} \, \mathrm{d}y \leq \int_{\Omega_\rho(x)} M \, \mathrm{d}y = M \, |\Omega_\rho(x)|,
$$
where $|\Omega_\rho(x)|$ denotes the measure of the set $\Omega_\rho(x)$.

Since $|\Omega_\rho(x)|$ is finite (as $\Omega_\rho(x)$ is a bounded neighborhood), the integral is finite, and thus the local H\"older seminorm exists.
\end{proof}

\section{Methodology}\label{Methodology}%PINNs Based on Local H\"older Norm}
This section outlines the core methodology and divided into four parts. The first part provides an introduction to PINNs. The second part formulates a novel loss function motivated by the regularity theory of elliptic PDEs. The third part presents a discrete approximation scheme for practical computation. The fourth part designs a numerical algorithm for solving elliptic PDEs and provides a theoretical analysis of the approximation error.

In this paper, we focus on the Dirichlet boundary value problem for second-order elliptic partial differential equations, including both divergence and non-divergence forms. Specifically, we study PDEs of the form
\begin{equation}
  \left\{\begin{aligned}
  &L u(x)=f(x),\quad x \in \Omega, \\
  &u(x)=g(x),\quad x \in \partial \Omega.
  \end{aligned}\right.
  \label{Equation:9} 
\end{equation}
Here, $f\in L^2(\Omega), g\in H^1(\Omega), u(x)$ denotes the solution defined on the spatial domain $\Omega$ with variable $x$, and $L$ is a second-order elliptic operator. 

\subsection{PINNs}\label{PINNMETHOD}
PINNs solve PDEs by incorporating physical laws directly into the training process. In this framework, the solution is approximated by a deep neural network $u_\theta$, which maps spatial coordinates to the predicted solution, $\theta$ denotes the trainable parameters. The differential operator $L$ is computed via automatic differentiation.

To measure the discrepancy between the neural network prediction $u_\theta$ and the PDEs constraints, we define the loss function as a weighted sum of the $L^2$ norms of the residuals from the governing equation and boundary conditions:
\begin{equation}\label{Equation:5}
  \begin{aligned}
    \mathcal{L}_{PINNs}(\theta)&=w_r \mathcal{L}_r\left(\theta \right)+w_b \mathcal{L}_b\left(\theta \right),\\
    \mathcal{L}_r\left(\theta \right)&=\Vert Lu_\theta(x)-f(x)\Vert_{2,\Omega}^2,\\
    \mathcal{L}_b\left(\theta \right)&=\Vert u_\theta(x)-g(x)\Vert_{2,\partial\Omega}^2.
    \end{aligned}
\end{equation}
In equations (\ref{Equation:5}), $w_r$ and $w_b$ are the weights for the residual loss $\mathcal{L}_r\left(\theta \right)$ and boundary loss $\mathcal{L}_b\left(\theta \right)$. 
Since evaluating the loss function $\mathcal{L}_{PINNs}(\theta)$ over the entire domain $\Omega$ is impractical, it is instead approximated by the mean squared error of the training dataset.
The training dataset $X$ consists of an unsupervised dataset $X_r$ and a supervised dataset $X_b$
$$
X_r = \left\{ x_r^i | x_r^i \in \Omega\right\}_{i=1}^{N_r},\quad
X_b = \left\{ \left(x_b^i, g(x_b^i)\right) | x_b^i \in \partial \Omega\right\}_{i=1}^{N_b}.
$$
Therefore, the loss function $\mathcal{L}_{PINNs}(\theta)$ can be approximated by
\begin{equation}
  \begin{aligned}
    \mathcal{L}_{PINNs}(\theta;X)&=w_r \mathcal{L}_r\left(\theta; X_r\right)+w_b \mathcal{L}_b\left(\theta; X_b\right),\\
    \mathcal{L}_r\left(\theta ; X_r\right)&=\dfrac{1}{N_r}\sum_{i=1}^{N_r} \left|Lu_\theta(x_{r}^{i})-f(x_{r}^{i})\right|^2,\\
    \mathcal{L}_b\left(\theta ; X_b\right)&=\dfrac{1}{N_b}\sum_{i=1}^{N_r} \left|u_\theta(x_{b}^{i})-g(x_{b}^{i})\right|^2.
  \end{aligned}
  \label{Equation:6}
\end{equation}

Therefore, the problem of solving the partial differential equation is equivalent to the following parameter optimization problem
\begin{equation}\nonumber
  \theta^*=\arg \min _\theta \mathcal{L}_{PINNs}(\theta ; X) .
  \label{Equation:8}
\end{equation}
To obtain the optimal network parameters $\theta^*$, various optimization algorithms can be employed, such as gradient descent, stochastic gradient descent (SGD), Adam, and L-BFGS.

The universal approximation theorem \cite{cybenko1989approximation} forms the theoretical basis for using deep neural networks (DNNs) to solve PDEs, as it ensures their expressive power for representing complex solutions.
Nevertheless, practical challenges persist. These include the design of suitable network architectures and hyperparameters, the need for adequate training data, and the high computational cost of optimization.

\subsection{Loss Function Guided by Local H\"older Regularity}
Motivated by the interior regularity theory of elliptic PDEs, we introduce a novel regularization strategy within the PINNs framework. Specifically, we incorporate the local H\"older seminorm of the solution $u(x)$, the solution to Equation (\ref{Equation:9}), into the loss function. This regularization term promotes local smoothness in the learned solution and enhances generalization. We define the following regularized loss function

\begin{equation}
  \mathcal{L}(\theta )=\mathcal{L}_{PINNs}(\theta )+\lambda \mathcal{L}_{local-H\ddot{o}lder}(\theta ),
  \label{Equation:10}
\end{equation}
where $\mathcal{L}_{PINNs}(\theta )$ is defined in  (\ref{Equation:5}), and  $\mathcal{L}_{local-H\ddot{o}lder}(\theta )$ is defined as
\begin{equation}
  \mathcal{L}_{local-H\ddot{o}lder}(\theta ) = \sup_{x \in \Omega} |u_\theta(x)| + \sup_{x \in \Omega} \sup _{y \in \Omega_\rho(x)} \frac{|u_\theta(x)-u_\theta(y)|}{|x-y|^\alpha}.
  % \end{aligned}
  \label{Equation:11}
\end{equation}
Here, $\Omega_\rho(x)$ represents a neighborhood centered at $x$ with radius $\rho$.

The proposed regularized loss function in Equation~(\ref{Equation:10}) includes an additional local H\"older seminorm term that enforces smoothness of the neural network output within local neighborhoods. When the regularization coefficient $\lambda$ is set to zero, the loss reduces to the standard PINNs formulation. This H\"older regularization term can be interpreted as embedding a local smoothness prior into the training process. 

In practice, evaluating the local H\"older seminorm over the entire continuous domain $\Omega$ is computationally infeasible, as it requires computing the supremum over all point pairs within each neighborhood $\Omega_\rho(x)$. 
To address this, we discretize the domain $\Omega$ into a finite set of residual points $X_r \subset \Omega$, which are commonly used in the PINNs framework to enforce the differential equation residual. The local H\"older seminorm is then approximated by restricting the evaluation to these residual points. Specifically, we approximate Equation~(\ref{Equation:11}) as

\begin{equation}\nonumber
  \mathcal{L}_{local-H\ddot{o}lder}(\theta ; X_r) = \sup_{x \in X_r} |u_\theta(x)| + \sup_{x \in X_r} \sup _{y \in \Omega_\rho(x)} \frac{|u_\theta(x)-u_\theta(y)|}{|x-y|^\alpha}.
  \label{Equationlisan}
\end{equation}

This discrete formulation enables efficient numerical implementation while preserving the essence of local H\"older continuity. It ensures that the neural network solution maintains local regularity over the representative training points in $X_r$, which are typically dense enough to capture the behavior of the solution across the domain.

\subsection{Calculation Scheme Based on Variable-Distance Sampling}\label{sec:variable}

According to Definition~\ref{Definition:2}, the local H\"older seminorm of the function $u_\theta$ at a point $x$ is defined as an integral over the neighborhood $\Omega_\rho(x)$. However, directly evaluating this quantity is computationally intractable, as it requires computing a singular integral over a continuous domain.
To overcome this challenge, we adopt a discrete approximation of the seminorm, guided by Lemma~\ref{lemma2}, which provides a sufficient condition for the existence of the local H\"older seminorm. Specifically, we discretize the solution domain $\Omega$ into a finite set of residual points $X_r = \{x_r^i \in \Omega\}_{i=1}^{N_r}$, and approximate each neighborhood $\Omega_\rho(x)$ using a finite collection of sample points. This approach enables efficient and scalable computation of the H\"older term while maintaining the desired local regularity constraints.

Before introducing the discrete formulation, we present a regularity assumption that quantifies the approximation error caused by discretization.
\begin{assumption}
Let \( u_\theta(x): \Omega \rightarrow \mathbb{R}^m \). When the neighborhood \( \Omega_\rho(x) \) is approximated by a discrete set of points  $\tilde{\Omega}_\rho(x) = \left\{ y_i|y_i\in \Omega_\rho(x) \right\}_{i=1}^{N_H}$, we assume the discretization error of the local H\"older seminorm satisfies
\begin{equation}\nonumber
% \begin{aligned}
\left|\sup _{y \in \Omega_\rho(x)} \frac{|u_\theta(x)-u_\theta(y)|}{|x-y|^\alpha} - \sup _{y \in \tilde{\Omega}_\rho(x)} \frac{|u_\theta(x)-u_\theta(y)|}{|x-y|^\alpha} \right| \leq C_{sup} N_H^{-\alpha_{sup}},
% \end{aligned}
\end{equation}
where \( \alpha_{\text{sup}} > 0 \), and the constant \( C_{\text{sup}} \) depends on the domain dimension and the radius \( \rho \). Moreover, as \( \rho \to 0 \), \( C_{\text{sup}} \to 0 \).
\end{assumption}

In the variable-distance scheme, the local neighborhood $\Omega_\rho(x)$ is approximated by a set of randomly distributed points within a closed ball of radius $\rho$ centered at $x$. Formally, the discrete neighborhood is defined as
\[
\Omega_\rho(x) \approx \tilde{\Omega}_\rho(x) = \{x\} \oplus \kappa_{\rho}, \quad\kappa_\rho =  \{y_i \mid y_i = \rho v_i,\ v_i \in \mathbb{B}^n(0,1)\}_{i=1}^{N_H},
\]
where $v_i$ are randomly sampled vectors inside the unit ball $\mathbb{B}^n(0,1)$ in $\mathbb{R}^n$, and $N_H$ is a hyperparameter. In the left part of Figure~\ref{fig:1}, the lower image within the training set visualization depicts the distribution of $\kappa_{\rho}$ in the two-dimensional case.

Based on this discretization, the local H\"older loss is approximated by
\begin{equation}
\mathcal{L}_{local-H\ddot{o}lder}^{Variable}(\theta; X_r) = \sup_{x \in X_r} |u_\theta(x)| + \sup_{x \in X_r} \sup_{y \in \kappa_\rho} \frac{|u_\theta(x) - u_\theta(x+y)|}{|y|^\alpha},
\label{varholder}
\end{equation}
and the total loss function is
\begin{equation}\label{total}
\mathcal{L}(\theta; X) = \mathcal{L}_{PINNs}(\theta; X) + \lambda \mathcal{L}_{local-H\ddot{o}lder}^{Variable}(\theta; X_r).
\end{equation}

Based on the variable-distance sampling strategy described above, the complete training procedure for the proposed method is presented in Algorithm~\ref{alm1}, and an illustration of the training process is provided in Figure~\ref{fig:1}.
\begin{figure*}
  \centering
  \includegraphics[width=\textwidth]{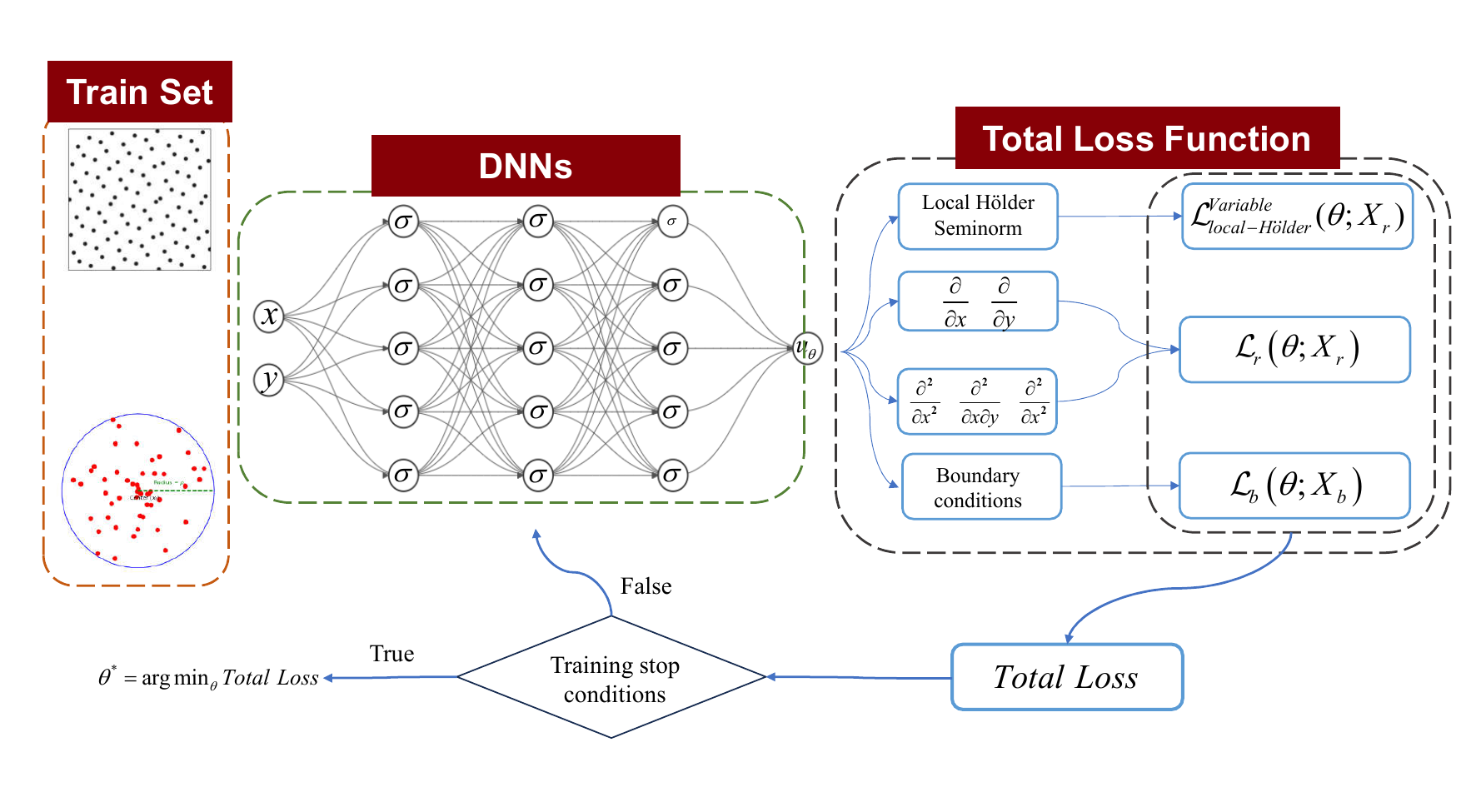}
  \caption{Network Architecture: The training data includes residual points (upper left figure) and the point set used to calculate the Local H\"older regularization term (lower left figure)}
  \label{fig:1}       % Give a unique label
\end{figure*}

\begin{algorithm}[htbp]
  \caption{PINNs with Local H\"older Regularization (Variable-Distance Sampling)}
  \label{alm1}
  \begin{algorithmic}
  \State \textbf{Input}  Network size, The training dataset $X_r, \kappa_{\rho}$, the regularization parameters $\lambda$ and H\"older indices $\alpha$.
  \State \textbf{Step 1} Define a neural network $u_\theta(x)$ and initialize parameters $\theta_0$.
  \State \textbf{Step 2} Calculate the residual loss $\mathcal{L}_{r}(\theta ; X_r)$ according to Equation (\ref{Equation:6}).
  \State \textbf{Step 3} Calculate regularization loss term using Equation (\ref{varholder}).
  \State \textbf{Step 4} Calculate total loss according to Equation(\ref{total})
  \State \textbf{Step 5} Train the neural network to find the best parameters $\theta^*$ by minimizing the total loss function;
  \State \textbf{End}
  \end{algorithmic}
  \end{algorithm}

\subsection{Error Estimation} 
In this subsection, we derive the generalization error estimate of the proposed method. To facilitate the analysis, we first introduce two fundamental assumptions that are essential for the proof.
\begin{assumption}\cite{mishra2023estimates}\label{stable}
Let \( X \) and \( Y \) be Banach spaces equipped with norms \( \| \cdot \|_X \) and \( \| \cdot \|_Y \), respectively. Consider an elliptic partial differential operator \( L: X \to Y \). For any \( u, v \in X \), the following stability estimate holds:
\[
\| u - v \|_X \leq C_{pde} \|Lu - Lv\|_Y,
\]
where the constant \( C_{pde} > 0 \) depends explicitly on \( \| u \|_X \) and \( \| v \|_X \).
\end{assumption}

\begin{remark}
Assumption~\ref{stable} represents a standard stability condition for PDEs. In the linear case, if the operator \( L: X \to Y \) admits a bounded inverse \( L^{-1}: Y \to X \), then the assumption reduces to the boundedness of \( L^{-1} \), i.e., \( \|L^{-1}\| \leq C < \infty \). This holds for many well-posed linear PDEs.
\end{remark}

To rigorously analyze the approximation error, it is necessary to consider the numerical integration error. We therefore introduce the following assumption to characterize this integration error.
\begin{assumption}\label{quadr}
Consider a function $g:\Omega \to \mathbb{R}^m,$ 
and the integral 
\[
I(g) := \int_{\Omega} g(x) \, dx.
\]
To approximate this integral, we employ a quadrature rule with \( N \) quadrature points \( x_i \in \Omega \) and positive weights \( w_i > 0 \), yielding the numerical approximation
\[
I_N(g) := \sum_{i=1}^{N} w_i g(x_i).
\]
We assume that the quadrature rule satisfies the following error estimate
\begin{equation}
|I(g) - I_N(g)| \leq C_{quad} N^{-\alpha}, \quad \alpha > 0,
\end{equation}
where the constant \( C_{quad} \) depends on the dimension of the domain and the property of the integrand \( g \).

\end{assumption}

Based on the two assumptions presented above, we are now in a position to establish a generalization error bound for the proposed method.

\begin{theorem}\label{theorem5}
  Let $u^*(x)\in C^{2,\alpha}(\Omega)$ be the unique solution of Equation(\ref{Equation:9}), and $u_{\theta}(x)$ is the approximate solution obtained via the proposed method. Then, under Assumptions \ref{stable} and \ref{quadr}, the generalization error is bounded by
 \begin{equation}
\| u_\theta(x) - u^*(x) \|_{2,\alpha;\Omega} \leq C_{pde}\left(\mathcal{L}(\theta ; X) + C_{quad}N_r^{-\alpha_r}\right),
  \end{equation}
where $\mathcal{L}(\theta ; X)$ are defined in (\ref{total}), and $N_r$ represents the number of internal sampling points.
\end{theorem}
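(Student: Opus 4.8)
The plan is to compose the abstract stability bound of Assumption~\ref{stable} with the quadrature error of Assumption~\ref{quadr} and the monotonicity of $\mathcal{L}(\theta;X)$ in its nonnegative summands. First I would take the Banach spaces in Assumption~\ref{stable} to be $X = C^{2,\alpha}(\Omega)$ with $\|\cdot\|_X = \|\cdot\|_{2,\alpha;\Omega}$ and $Y$ the space in which the PDE (and boundary) residual is measured, so that $\|Lw-f\|_Y$ plays the role of the continuous PINNs loss $\mathcal{L}_{PINNs}(\theta)$ of (\ref{Equation:5}). Since $u^{*}\in C^{2,\alpha}(\Omega)$ by hypothesis (cf.\ Theorem~\ref{lemma1}) and $u_\theta\in C^{2,\alpha}(\Omega)$ because it is built from smooth activations, and since $Lu^{*}=f$ from (\ref{Equation:9}), applying the stability estimate to the pair $(u_\theta,u^{*})$ gives
\[
\|u_\theta-u^{*}\|_{2,\alpha;\Omega}\ \le\ C_{pde}\,\|Lu_\theta-f\|_{Y},
\]
reducing the task to bounding the continuous residual $\|Lu_\theta-f\|_Y$ by the computable loss and a discretization term.

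Next I would write the continuous residual as $\int_\Omega|Lu_\theta(x)-f(x)|^2\,dx$ (plus the analogous boundary integral) and invoke Assumption~\ref{quadr} with integrand $g(x)=|Lu_\theta(x)-f(x)|^2$ and the $N_r$ interior nodes $\{x_r^i\}_{i=1}^{N_r}$ carrying uniform weights, for which $I_{N_r}(g)=\mathcal{L}_r(\theta;X_r)$ as in (\ref{Equation:6}); doing the same on the boundary yields
\[
\|Lu_\theta-f\|_{Y}\ \le\ \mathcal{L}_{PINNs}(\theta;X)+C_{quad}\,N_r^{-\alpha_r},
\]
after absorbing the quadrature weights and $|\Omega|$ into the constants. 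Since $\lambda>0$ and the local-H\"older term in (\ref{varholder}) is nonnegative, $\mathcal{L}_{PINNs}(\theta;X)\le\mathcal{L}(\theta;X)$; combining the two displays gives the claimed bound
\[
\|u_\theta-u^{*}\|_{2,\alpha;\Omega}\ \le\ C_{pde}\bigl(\mathcal{L}(\theta;X)+C_{quad}\,N_r^{-\alpha_r}\bigr).
\]

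I expect the main obstacle to be norm compatibility rather than any hard estimate. The left-hand side is a $C^{2,\alpha}$ norm while the loss is assembled from $L^2$-type residuals and a H\"older seminorm, so the whole argument rests on choosing $X$ and $Y$ in Assumption~\ref{stable} consistently with the definitions (\ref{Equation:5}) and (\ref{Equation:11}) of $\mathcal{L}_{PINNs}$ and the local-H\"older term, and on fixing a single squared/unsquared convention under which the $Y$-norm, the residual loss $\mathcal{L}_r$, and the quadrature integrand $g$ all appear in the same scaling, so that square roots do not interrupt the telescoping; the boundary contribution, which is present in $\mathcal{L}(\theta;X)$ but absent from the stability bound, is simply carried along as a nonnegative surplus on the right-hand side. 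The remaining points are routine: checking that $u_\theta\in C^{2,\alpha}(\Omega)$ so that Assumption~\ref{stable} genuinely applies, and that $|Lu_\theta-f|^2$ is regular enough for the quadrature rate $\alpha_r$ of Assumption~\ref{quadr} to hold.
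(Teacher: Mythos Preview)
Your proposal is correct and follows essentially the same three-step argument as the paper: apply the stability Assumption~\ref{stable} with $Lu^{*}=f$, replace the continuous residual by its discrete counterpart via Assumption~\ref{quadr}, and then bound $\mathcal{L}_r(\theta;X_r)$ (or $\mathcal{L}_{PINNs}(\theta;X)$) by the full loss $\mathcal{L}(\theta;X)$ using nonnegativity of the remaining terms. Your version is in fact tidier than the paper's, which starts with $\|u_\theta-u^{*}\|_{2,\Omega}^2$ and switches to $\|u_\theta-u^{*}\|_{2,\alpha;\Omega}$ without comment; your explicit choice $X=C^{2,\alpha}(\Omega)$ in Assumption~\ref{stable} and your remarks on the squared/unsquared convention address exactly the norm-compatibility gap you flagged.
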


\begin{proof}
  By setting $u = u_{\theta}(x),v=u^*(x)$ in Assumption \ref{stable}, we obtain
  \begin{equation}
    \begin{aligned}
    \| u_\theta(x) - u^*(x) \|_{2,\Omega}^2 &\leq C_{pde} \|Lu_\theta(x) - Lu^*(x)\|_{2,\Omega}^2\\
    & \leq C_{pde} \|Lu_\theta(x) - f(x)\|_{2,\Omega}^2\\
    & = C_{pde} \mathcal{L}_{r}(\theta) .
    \end{aligned} 
  \end{equation}
  According Assumption \ref{quadr}, we obtain
  \begin{equation}
    \mathcal{L}_{r}(\theta) = \|Lu_\theta(x) - Lu^*(x)\|_{2,\Omega}^2 \leq \mathcal{L}_r\left(\theta ; X_r\right) + C_{quad}N_r^{-\alpha_r}
  \end{equation}
  Since the residual loss $\mathcal{L}_r(\theta; X_r)$ is a component of the total loss $\mathcal{L}(\theta ; X)$ , we conclude that:
  \[\| u_\theta(x) - u^*(x) \|_{2,\alpha;\Omega} \leq C_{pde}\left(\mathcal{L}(\theta ; X) + C_{quad}N_r^{-\alpha_r}\right).\]
  Finally, by combining $u^*(x)\in C^{2,\alpha}(\Omega)$ and the error estimates, we obtain 
  \[
  \begin{aligned}
      \| u_\theta(x)\|_{2,\alpha;\Omega} &\leq \| u_\theta(x) - u^*(x) \|_{2,\alpha;\Omega} + \| u^*(x) \|_{2,\alpha;\Omega} \\
      &\leq C_{pde}\left(\mathcal{L}(\theta ; X) + C_{quad}N_r^{-\alpha_r}+ \| u^*(x) \|_{2,\alpha;\Omega}\right),
  \end{aligned}\]
  which means $u_\theta(x)\in C^{2,\alpha}(\Omega)$. This completes the proof.
\end{proof}

\section{Numerical Experiment}\label{NumExp}
In this section, the PINNs based on the local H\"older Regularity, proposed in Section~\ref{Methodology}, is applied to three distinct types of elliptic PDEs. For each type of numerical experiment, the influence of various hyperparameter combinations on the model performance is systematically investigated. The evaluation metrics include the Relative Mean Square Error (RMSE) and the $L^{\infty}$ norm

$$
\begin{aligned}
  RMSE (u, u_\theta) &= \dfrac{\sqrt{\sum_{i=1}^{N}|u_\theta(x_i)-u(x_i)|^2}}{\sqrt{\sum_{i=1}^{N}|u(x_i)|^2}},\\
  \Vert u-u_\theta \Vert_{\infty} &= \max_{1 \leq i \leq N} |u_\theta(x_i)-u(x_i)|,
\end{aligned}
$$
where $N$ represent the number of test points. To provide a baseline for comparison, the standard PINNs method described in Section~\ref{PINNMETHOD} is also implemented. All training is conducted on a system configured with an NVIDIA GeForce RTX 2060 GPU (1920 CUDA cores, 6GB GDDR6 memory), an Intel Core i7-11700 processor, and 32GB of RAM.

For boundary conditions, a penalty-free method is modifying the network architecture\cite{lu2021physics}. 
The approximate solution of the PDEs is constructed by
\begin{equation}\nonumber
  \hat{u}_{\theta}(x,y)=u_0(x,y)+\ell(x) \mathcal{N}_{\theta}(x,y),
\end{equation}
where $u_0(x)$ is the boundary function, $\mathcal{N}_{\theta}(x)$ is the outputs of the DNNs which parametric by $\theta$, $\ell(x)$ is a function satisfying the following two conditions
\begin{equation}\nonumber
  \begin{cases}\ell(x,y)=0, & (x,y) \in \partial \Omega, \\ \ell(x,y)>0, & (x,y) \in \Omega .\end{cases}
\end{equation}
This approach satisfies the boundary conditions exactly; therefore, no sampling on the boundary is required, which in turn reduces the overall computational cost.

\subsection{Second-Order ODE}
We consider a second-order ordinary differential equation with the boundary condition
\begin{equation}
  \left\{\begin{aligned}
  &-\frac{d^2 u(x)}{d x^2}=f(x) \quad  x\in(-\pi,\pi), \\
  &u(-\pi) = u(\pi) = 0,
\end{aligned}\right.
  \label{one-Dimension}
\end{equation}
where $f(x)=\sum_{i=0}^{3}(2i+1)\sin((2i+1)x)$. The corresponding exact solution has the analytical form $u(x)=\sum_{i=0}^{3}\sin((2i+1)x)/(2i+1)$.
The exact solution is shown in Figure \ref{simple_gt}.

To solve the problem, neural networks with architecture \([1, 20, 20, 20, 1]\) are constructed. The \(\tanh(x)\) is chosen as the activation function. The parameters are set as \(N_r = 100, w_r = 1\) and \(\lambda = 1e-3\). For testing and visualization purposes, a uniform mesh grid with a step size of \(0.01\pi\) is generated.

Figure \ref{fig:simple} shows the experimental results obtained using Standard PINNs and the two types of models proposed in this paper.
As shown in Figure \ref{simple_standard_pinn_pre} and \ref{simple_standard_pinn_abs_Error}, the prediction errors of the Standard PINNs are concentrated in the region of (-2, 0), with the maximum absolute error being approximately 0.020. 
However, in the model proposed in this paper, 20 additional points are introduced in the vicinity of each residual point. The resulting predictions are illustrated in Figure~\ref{simple_var_pre} and the corresponding absolute errors are shown in Figure~\ref{simple_var_abs_Error}. The maximum absolute error achieved by the proposed method is approximately 0.008, representing a reduction of about 60\% compared to the standard PINNs. A quantitative comparison of the performance of all three methods on the test set is provided in Table~\ref{tab:simple}.

The resulting improvement is significant, which further demonstrates the effectiveness of the models we propose. In this simple example, we did not deliberately fine-tune the parameters to obtain the optimal results for each method. We merely added a regularization term to the loss function of the standard PINNs. 

\begin{table*}[htbp]
  \centering
  \caption{Testing Error of Second-Order ODE($\alpha = 1/2, \rho = 0.01$)}
  \label{tab:simple}       
  \tabcolsep=0.3cm
  \renewcommand\arraystretch{1.25}
  \begin{tabular}{c|cc}
    \hline
    Method & Standard PINN &  Ours \\
    \hline
    RMSE & 0.01765 &  0.00494 \\
    Bounds of Absolute Error & 2.09782e-02 &  7.79681e-03 \\
    \hline
    \end{tabular}
  \end{table*}

\begin{figure*}[htbp]
  \centering
  \begin{subfigure}[b]{0.4\textwidth}
    \centering
    \includegraphics[width=\textwidth]{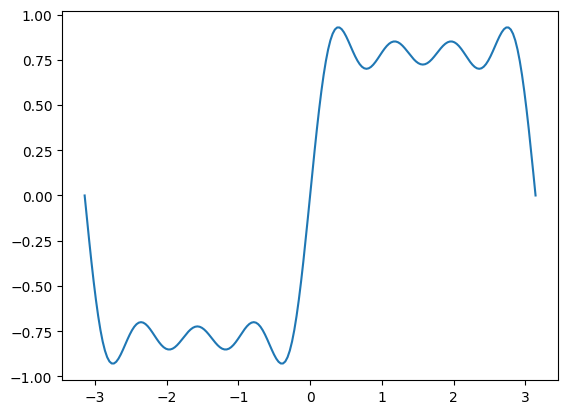}
    \caption{Ground Truth}
    \label{simple_gt}
  \end{subfigure}\\
  \begin{subfigure}[b]{0.4\textwidth}
    \centering
    \includegraphics[width=\textwidth]{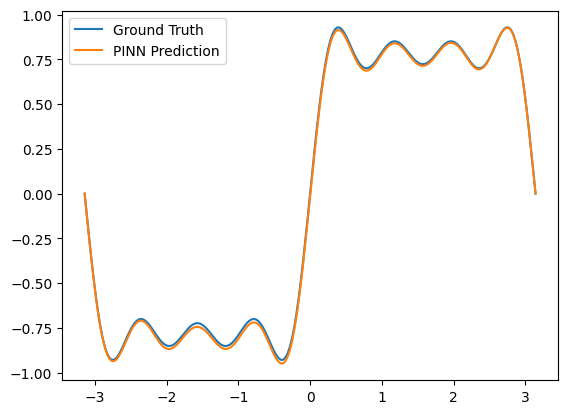}
    \caption{\centering{Standard PINNs Prediction}}
    \label{simple_standard_pinn_pre}
  \end{subfigure}
  \begin{subfigure}[b]{0.4\textwidth}
    \centering
    \includegraphics[width=\textwidth]{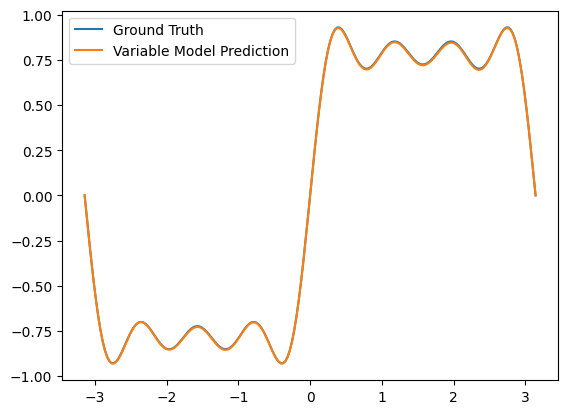}
    \caption{\centering{Ours Prediction($N_H=20$)}}
    \label{simple_var_pre}
  \end{subfigure}
  
  \begin{subfigure}[b]{0.4\textwidth}
    \centering
    \includegraphics[width=\textwidth]{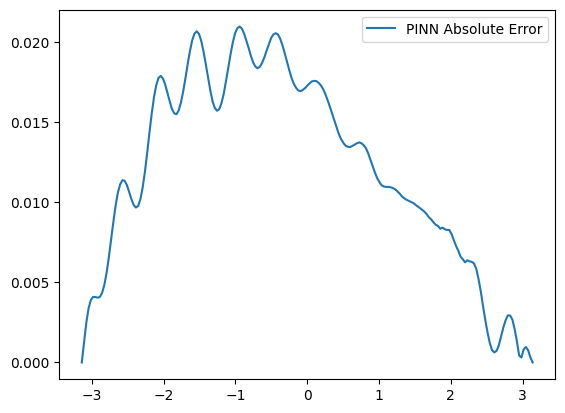}
    \caption{\centering{Standard PINNs Absolute Error}}
    \label{simple_standard_pinn_abs_Error}
  \end{subfigure}
  \begin{subfigure}[b]{0.4\textwidth}
    \centering
    \includegraphics[width=\textwidth]{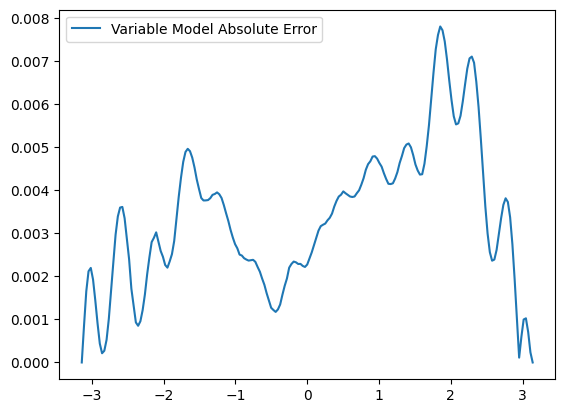}
    \caption{\centering{Ours Absolute Error($N_H=20$)}}
    \label{simple_var_abs_Error}
  \end{subfigure}

  \caption{The result of Second-Order ODE: $\alpha = 1/2, \rho = 0.01.$}
  \label{fig:simple}
\end{figure*}

\subsection{Poisson Equation}
In this subsection, we consider the Dirichlet boundary value of the Poisson equation as follows
\begin{equation}
  \left\{\begin{aligned}
  &- \Delta u(x, y) = f(x, y), \quad  (x, y) \in \Omega, \\
  &u(x, y) = g(x, y), \quad (x, y) \in \partial \Omega,
  \end{aligned}\right.
  \label{Poisson Equation}
\end{equation}
where $\Omega = (-1,1)\times (-1,1)$. The exact solution is specified as
\[
u(x, y) = \left(0.1 \sin (2 \pi x) + \tanh (10 x)\right) \sin (2 \pi y).
\]
The corresponding source term $f(x, y)$ and boundary condition $g(x, y)$ are derived analytically to ensure that the exact solution of Equation~(\ref{Poisson Equation}) is given by $u(x, y)$. The ground truth is visualized in Figure~\ref{Poisson_gt}.

\begin{figure*}[htbp]
  \centering
  \begin{subfigure}[b]{0.35\textwidth}
    \centering
    \includegraphics[width=\textwidth]{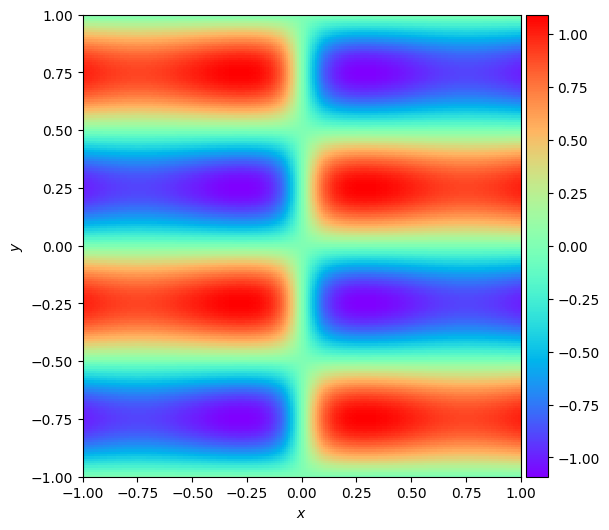}
    \caption{Ground Truth}
    \label{Poisson_gt}
  \end{subfigure}\\
  \begin{subfigure}[b]{0.35\textwidth}
    \centering
    \includegraphics[width=\textwidth]{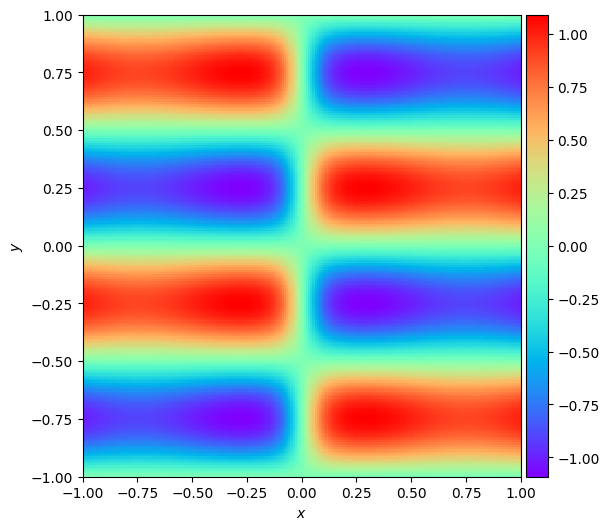}
    \caption{Standard PINNs Prediction}
    \label{poisson_standard_pinn_pre}
  \end{subfigure}
  \begin{subfigure}[b]{0.35\textwidth}
    \centering
    \includegraphics[width=\textwidth]{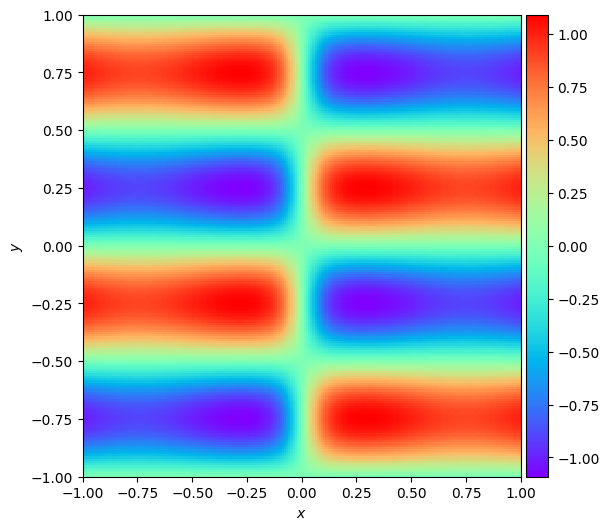}
    \caption{Ours Prediction}
    \label{poisson_var_pre}
  \end{subfigure}

  \begin{subfigure}[b]{0.35\textwidth}
    \centering
    \includegraphics[width=\textwidth]{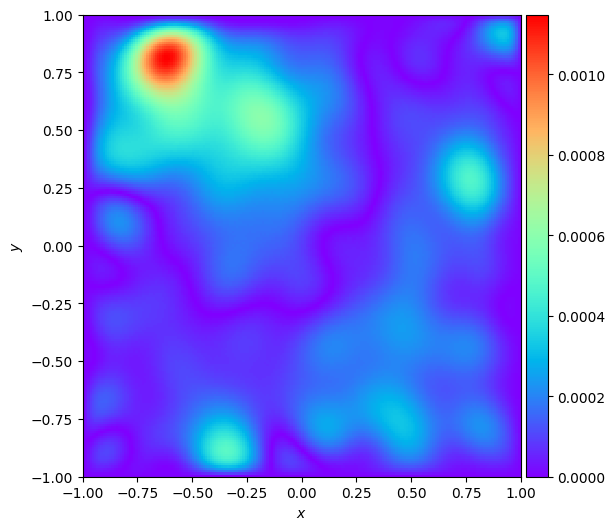}
    \caption{\centering{Standard PINNs \protect\\Absolute Error}}
    \label{poisson_standard_pinn_abs_Error}
  \end{subfigure}
  \begin{subfigure}[b]{0.35\textwidth}
    \centering
    \includegraphics[width=\textwidth]{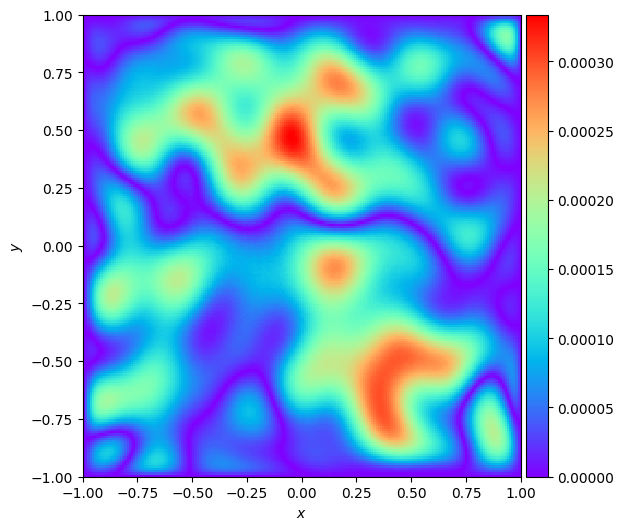}
    \caption{\centering{Ours \protect\\Absolute Error}}
    \label{poisson_var_abs_error}
  \end{subfigure}
  \caption{The result of Poisson Equation: $N_H=15, \alpha = 1/2, \rho = 0.005.$}
  \label{fig:test1}
\end{figure*}

To solve the problem, neural networks with architecture \([2, 20, 20, 20, 20, 1]\) are constructed. The \(\tanh(x)\) is chosen as the activation function. The parameters are set as \(N_r = 400, w_r = 1\) and \(\lambda = 1e-5\). For testing and visualization purposes, a uniform meshgrid with size $201\times 201$ is generated. Partial experimental results are shown in Figure \ref{fig:test1}.

The exact solution is periodic in $y$ for fixed $x$, and odd in $x$ for fixed $y$, as shown in Figure~\ref{Poisson_gt}. Figure~\ref{fig:test1} shows the predictions and their corresponding absolute errors. 
As illustrated in the third row of Figure~\ref{fig:test1}, the Standard PINNs method exhibits a highly non-uniform error distribution, with the maximum absolute error—approximately 0.001—concentrated in the vicinity of \((-0.6, 0.8)\), while the error in other regions remains relatively low. In comparison, under the parameter setting of $N_H = 15, \alpha = 1/2$, and $\rho = 0.005$, the two proposed models demonstrate a notable improvement in accuracy. Specifically, the maximum absolute error is reduced to around 0.0003, corresponding to a reduction of approximately 70\% relative to the Standard PINNs.

Moreover, the error distribution in the proposed model is noticeably more uniform, which can be attributed to the regularization term introduced in our framework. Figures~\ref{poisson_standard_pinn_abs_Error} and \ref{poisson_var_abs_error} further demonstrate that proposed method consistently outperform the Standard PINNs in terms of absolute error. This indicates that the added regularization effectively constrains the model outputs, thereby mitigating large fluctuations and enhancing overall prediction stability.
For the impact on the other two parameters $\rho$ and $N_H$, the results are summarized in Table \ref{tab:2} (Benchmark: the RMSE of the Standard PINNs method is 0.00034).
  \begin{table*}[htbp]
    \centering
    \caption{Testing Error of Poisson Equation on Proposed Model($N_r = 400, \lambda = 1e-7$)}
    \label{tab:2}       % Give a unique label
    \tabcolsep=0.3cm
    \renewcommand\arraystretch{1.25}
    \begin{tabular}{c|ccc}
      \hline
      \diagbox{$\rho$}{RMSE}{$N_H$} & 15 & 20 & 30 \\
      \hline
      0.005 & 0.00018 & 0.00020 & 0.00032 \\
      % \hline
      0.01 & 0.00033 & 0.00031 & 0.00022 \\   % 注意方向参数是[]
      0.02 & 0.00013 & 0.00022 & 0.00020 \\ 
      \hline
      \end{tabular}
    \end{table*}

\subsection{Variable Coefficients Elliptic PDEs}
In this subsection, we demonstrate that the proposed method can also be applied to quasi-linear elliptic PDEs. Consider the following problem
\begin{equation}
  \left\{\begin{aligned}
  &c_1(x,y) \frac{\partial^2 u(x,y)}{\partial x^2} + c_2(x,y) \frac{\partial^2 u(x,y)}{\partial y^2} = f(x,y), \quad (x,y) \in \Omega, \\
  &u(x,y) = g(x,y), \quad (x,y) \in \partial \Omega,
  \end{aligned}\right.
  \label{quasi-linear}
\end{equation}
where \(\Omega = (-1,1)\times (-1,1)\), \(c_1(x,y) = 3 + 2 \cos \pi (x+y)\), \(c_2(x,y) = 3 + 2 \sin \pi (x+y)\), and the exact solution is given by 
\[
u(x,y) = e^{\frac{x^2}{2}} \cos(2\pi y).
\]
The source term \(f(x,y)\) and boundary function $g(x,y)$ is defined accordingly to ensure that \(u(x,y)\) satisfies Equation~\eqref{quasi-linear}.

We take the settings as layers is $[2, 20, 20, 20, 20, 1]$.  The \(\tanh(x)\) is chosen as the activation function. The parameters are set as \(N_r = 300, w_r = 1\) and \(\lambda = 1e-5\). For testing and visualization purposes, a uniform meshgrid with size $201\times 201$ is generated. 

The proposed model reduces the maximum absolute error and localizes the error regions more effectively compared to the Standard PINNs method. As shown in Figure~\ref{quasi_pinn_abs_Error}  and \ref{quasi_var_abs_error}, the maximum absolute error obtained using the Standard PINNs method is approximately 0.0008, mainly concentrated near the points \((-0.75, 0.4)\) and \((0.25, -0.6)\). In contrast, the proposed model achieves a maximum absolute error of approximately 0.0007. This corresponds to a reduction of 12.5\% compared to the Standard PINNs, along with a noticeable decrease in the spatial extent of regions exhibiting maximum errors, concentrating only around the point \((-0.75, 0.4)\).

\begin{figure*}[htbp]
  \centering
  \begin{subfigure}[b]{0.35\textwidth}
    \centering
    \includegraphics[width=\textwidth]{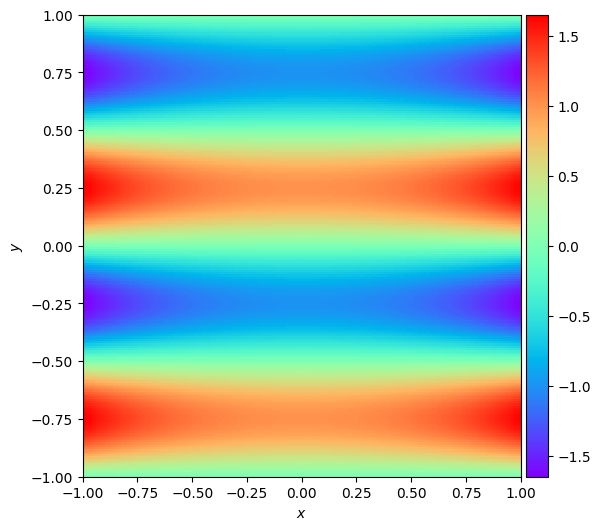}
    \caption{Ground Truth}
    \label{quasi_gt}
  \end{subfigure}\\
  \begin{subfigure}[b]{0.35\textwidth}
    \centering
    \includegraphics[width=\textwidth]{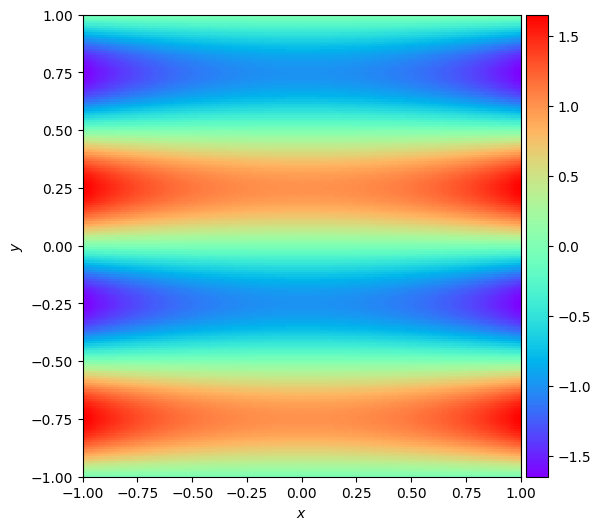}
    \caption{Standard PINNs Prediction}
    \label{quasi_pinn_pre}
  \end{subfigure}
  \begin{subfigure}[b]{0.35\textwidth}
    \centering
    \includegraphics[width=\textwidth]{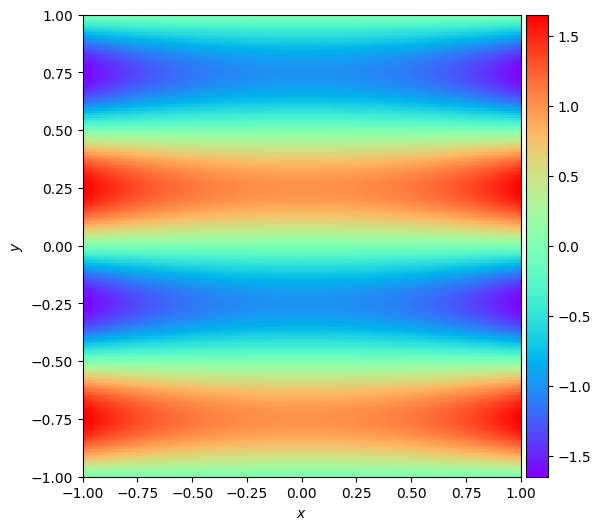}
    \caption{\centering{Ours Prediction}}
    \label{quasi_var_pre}
  \end{subfigure}

  \begin{subfigure}[b]{0.35\textwidth}
    \centering
    \includegraphics[width=\textwidth]{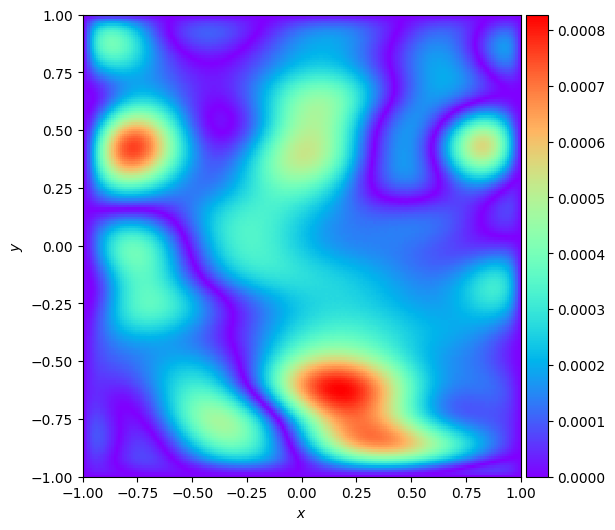}
    \caption{\centering{Standard PINNs \protect\\ Absolute Error}}
    \label{quasi_pinn_abs_Error}
  \end{subfigure}
  \begin{subfigure}[b]{0.35\textwidth}
    \centering
    \includegraphics[width=\textwidth]{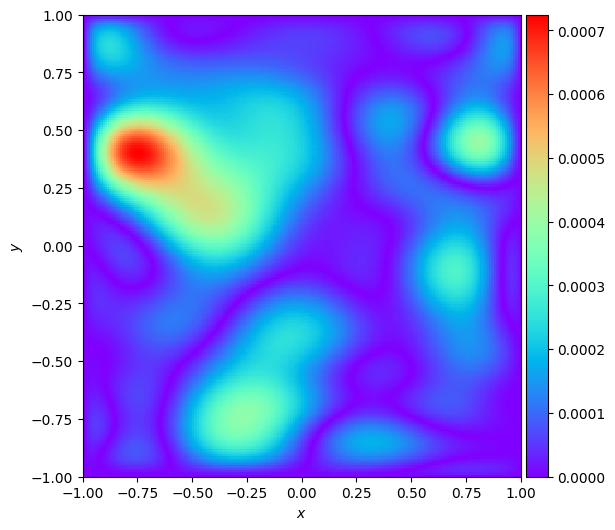}
    \caption{\centering{Ours \protect\\ Absolute Error}}
    \label{quasi_var_abs_error}
  \end{subfigure}
  \caption{The result of Variable Coefficients Elliptic PDE: $N_H=20, \alpha = 1/2, \rho = 0.01.$}
  \label{fig:test2}
\end{figure*}

The impact of the parameters $\rho$ and $N_H$ is summarized in Table \ref{tab:4}, where the RMSE of the Standard PINNs method serves as a benchmark (0.00031). The results indicate that the proposed model outperforms the Standard PINNs method when $\rho$ and $N_H$ are set to moderate values. However, performance deteriorates when these parameters are set either too high or too low, resulting in worse outcomes than the Standard PINNs. This behavior can be attributed to the fact that the exact solution exhibits good regularity with minimal variation. Thus, an excessive or insufficient number of points may cause instability in maximizing the regularization loss, ultimately compromising the accuracy of the results.
Figure \ref{fig:quasicanshubianhua} shows some visualization results of the effect of parameter $N_H$ on the experimental results.

\begin{figure*}[htbp]
  \centering
  \begin{subfigure}[b]{0.3\textwidth}
    \centering
    \includegraphics[width=\textwidth]{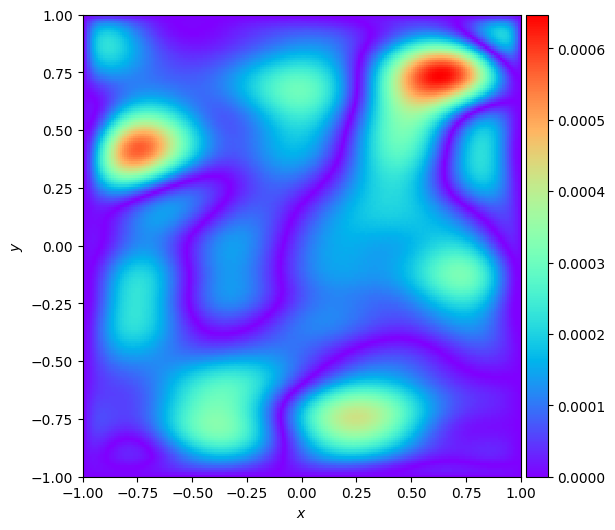}
    \caption{$N_H=15$}
    \label{quasi1510var}
  \end{subfigure}
  \begin{subfigure}[b]{0.3\textwidth}
    \centering
    \includegraphics[width=\textwidth]{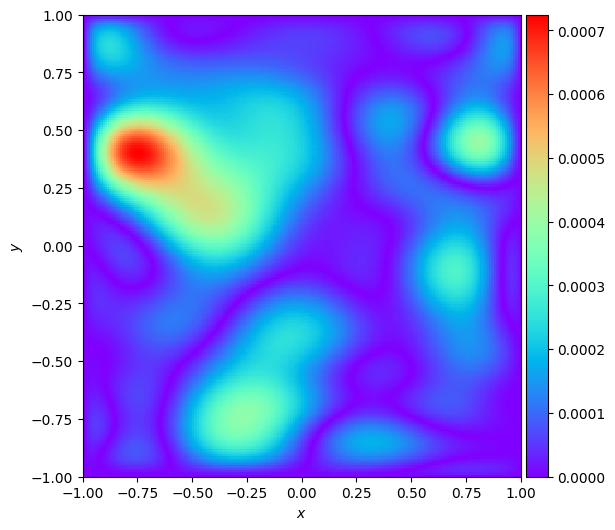}
    \caption{$N_H=20$}
    \label{quasi2010var}
  \end{subfigure}
  \begin{subfigure}[b]{0.3\textwidth}
    \centering
    \includegraphics[width=\textwidth]{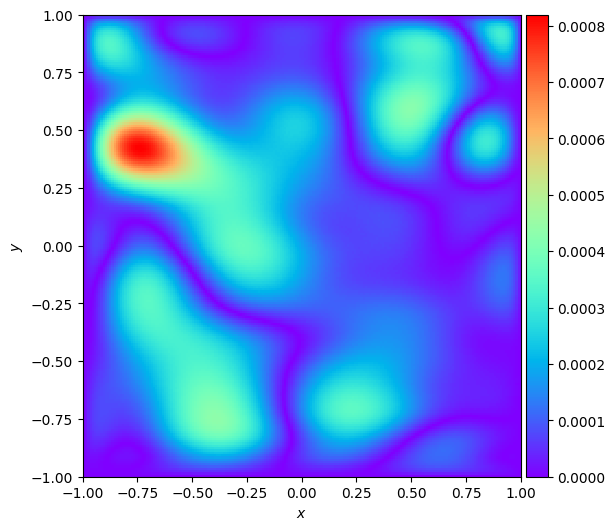}
    \caption{$N_H=30$}
    \label{quasi3010var}
  \end{subfigure}
  \caption{Impact of $N_H$ on Equation (\ref{quasi-linear})  ($N_r = 300, \alpha = 1/2, \rho = 0.01$).}
  \label{fig:quasicanshubianhua}
\end{figure*}

  \begin{table*}[htbp]
    \centering
    % table caption is above the table
    \caption{Testing Error of Equation (\ref{quasi-linear}) on Proposed Model($N_r = 300, \lambda = 1e-7$)}
    \label{tab:4}       % Give a unique label
    % For LaTeX tables use
    \tabcolsep=0.3cm
    \renewcommand\arraystretch{1.25}
    \begin{tabular}{c|ccc}
      \hline
      \diagbox{$\rho$}{RMSE}{$N_H$} & 15 & 20 & 30 \\
      \hline
      0.005 & 0.00043 & 0.00028 & 0.00023 \\
      % \hline
      0.01 & 0.00020 & 0.00021 & 0.00024 \\   % 注意方向参数是[]
      0.02 & 0.00026 & 0.00045 & 0.00042 \\ 
      \hline
      \end{tabular}
    \end{table*}

\subsection{Helmholtz Equation}
The Helmholtz equation is an elliptical partial differential equation that describes electromagnetic waves, and its basic form is as follows:
\begin{equation}
  \left\{\begin{aligned}
  &-(\Delta u(x,y) + k^2 u(x,y))= f(x,y) \quad (x,y)\in \Omega, \\
  &u(x,y)=g(x,y)\quad (x,y)\in \partial \Omega,
  \end{aligned}\right.
  \label{Helmholtz Equation}
\end{equation}
% -(\Delta u + k^2 u)= f,\quad \text { in } \Omega,
where $\Omega = (-1,1)\times(-1,1), k $ is a constant. we solve the Helmholtz equation with $k=\pi$, and the exact solution is specified as
\[
u(x, y) = \left(0.1 \sin (2 \pi x) + \tanh (10 x)\right) \sin (2 \pi y).
\]
The corresponding source term $f(x, y)$ and boundary condition $g(x, y)$ are derived analytically to ensure that the exact solution of Equation~(\ref{Helmholtz Equation}) is given by $u(x, y)$.

We take the settings as layers is $[2, 20, 20, 20, 20, 1]$.  The \(\tanh(x)\) is chosen as the activation function. The parameters are set as \(N_r = 500, w_r = 1\) and \(\lambda = 1e-5\). For testing and visualization purposes, a uniform meshgrid with size $201\times 201$ is generated. 

The first row of Figure~\ref{fig:test3} shows the exact solution of Equation~(\ref{Helmholtz Equation}). The second row presents the predictions obtained by training the Standard PINN method and the proposed model with parameters \(N_H = 15\), \(\alpha = 1/2\), and \(\rho = 0.02\). The corresponding absolute error distributions are illustrated in the third row.
As shown in Figure~\ref{helmholtz_pinn_abs_Error}  and \ref{helmholtz_var_abs_error}, the maximum absolute error obtained using the Standard PINNs method is approximately 0.0012, mainly concentrated near the points \((-0.25, -0.5)\). In contrast, the proposed model achieves a maximum absolute error of approximately 0.0005. This corresponds to a reduction of 58.3\% compared to the Standard PINNs.
\begin{figure*}[htbp]
  \centering
  \begin{subfigure}[b]{0.35\textwidth}
    \centering
    \includegraphics[width=\textwidth]{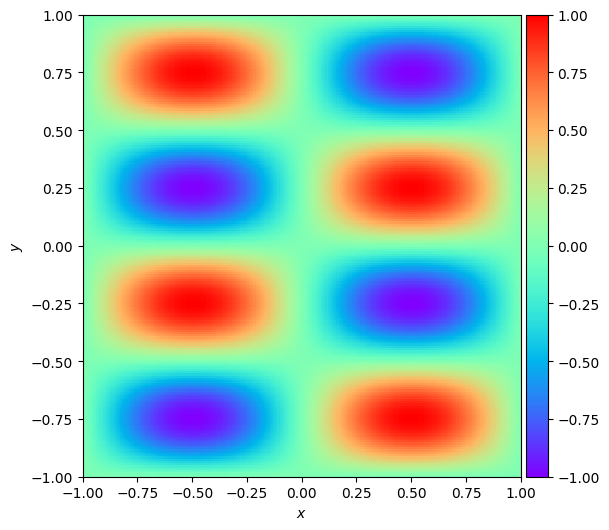}
    \caption{Ground Truth}
    \label{helmholtz_gt}
  \end{subfigure}\\
  \begin{subfigure}[b]{0.35\textwidth}
    \centering
    \includegraphics[width=\textwidth]{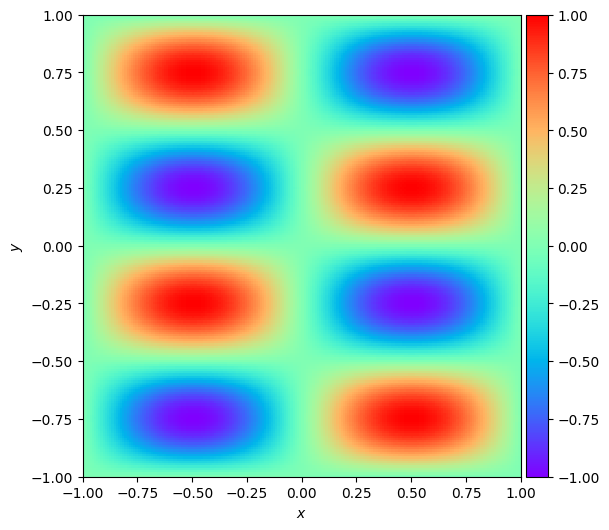}
    \caption{Standard PINNs Prediction}
    \label{helmholtz_pinn_pre}
  \end{subfigure}
  \begin{subfigure}[b]{0.35\textwidth}
    \centering
    \includegraphics[width=\textwidth]{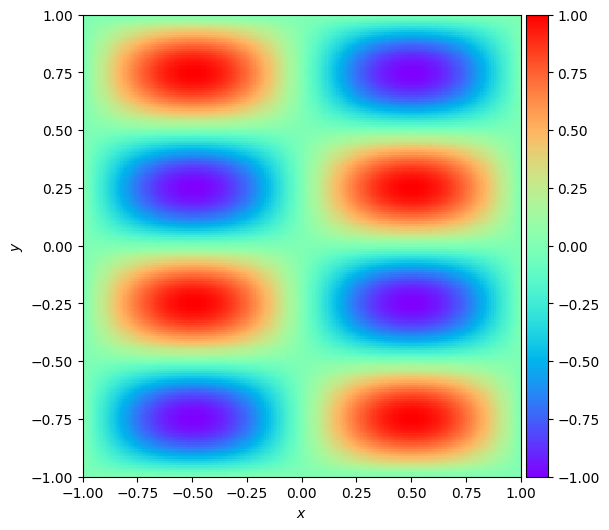}
    \caption{Ours Prediction}
    \label{helmholtz_var_pre}
  \end{subfigure}

  \begin{subfigure}[b]{0.35\textwidth}
    \centering
    \includegraphics[width=\textwidth]{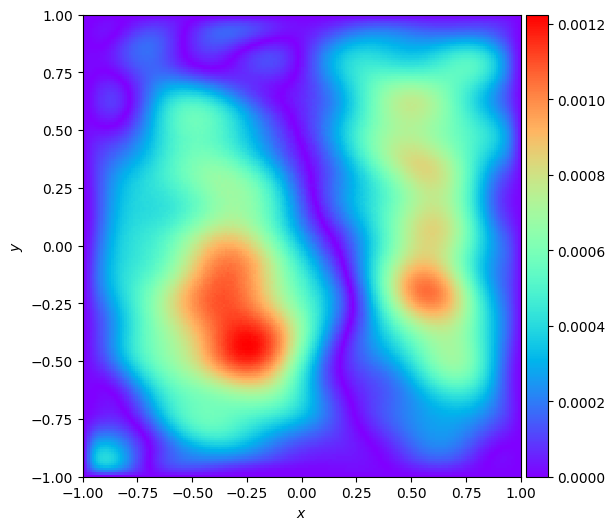}
    \caption{\centering{Standard PINNs \protect\\ Absolute Error}}
    \label{helmholtz_pinn_abs_Error}
  \end{subfigure}
  \begin{subfigure}[b]{0.35\textwidth}
    \centering
    \includegraphics[width=\textwidth]{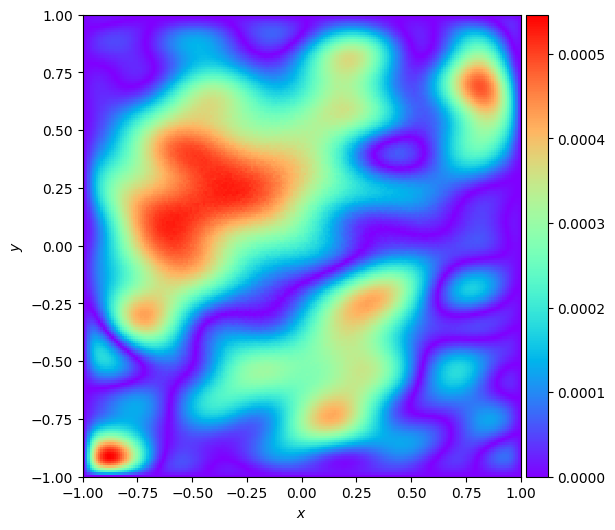}
    \caption{\centering{Ours \protect\\ Absolute Error}}
    \label{helmholtz_var_abs_error}
  \end{subfigure}
  \caption{The result of Helmholtz Equation ($N_H = 15, \rho = 0.02$). }
  \label{fig:test3}
\end{figure*}

For the impact on the other two parameters $\rho$ and $N_H$, the results are summarized in Table \ref{tab:6} (Benchmark: the RMSE of the Standard PINN method is 0.00095). 
Unlike previous cases, the proposed models exhibit higher RMSEs on the Helmholtz equation, as shown in Table \ref{tab:6}. Some failure cases are shown in Figure \ref{fig:Helmholtz failed}. 
The suboptimal performance may be attributed to two factors. First, the hyperparameter space may not have been sufficiently explored during experimentation, requiring more comprehensive optimization to identify the best configurations. Second, the network’s architectural constraints, limited neurons per layer and shallow depth, may have restricted its representational capacity, potentially causing convergence to local minima during optimization. These limitations highlight important directions for future research.

\begin{figure*}[htbp]
  \centering
  \begin{subfigure}[b]{0.3\textwidth}
    \centering
    \includegraphics[width=\textwidth]{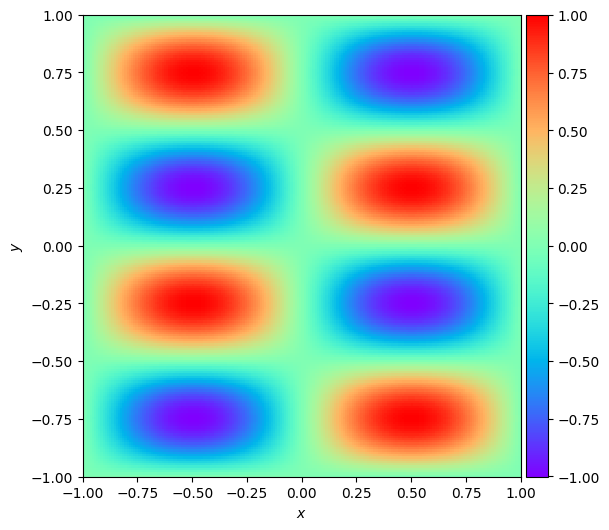}
    \captionsetup{justification=centering}
    \caption{$N_H=15, \rho = 0.01$}
    \label{helmholtz1510var_pre}
  \end{subfigure}
  \begin{subfigure}[b]{0.3\textwidth}
    \centering
    \includegraphics[width=\textwidth]{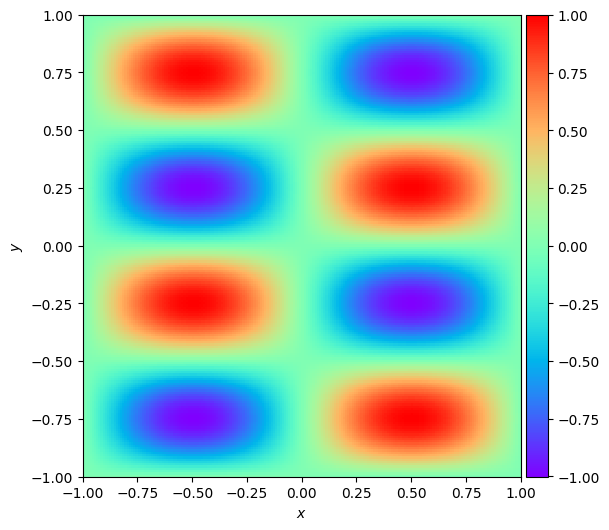}
    \captionsetup{justification=centering}
    \caption{$N_H=20, \rho = 0.01$}
    \label{helmholtz2010var_pre}
  \end{subfigure}

  \begin{subfigure}[b]{0.3\textwidth}
    \centering
    \includegraphics[width=\textwidth]{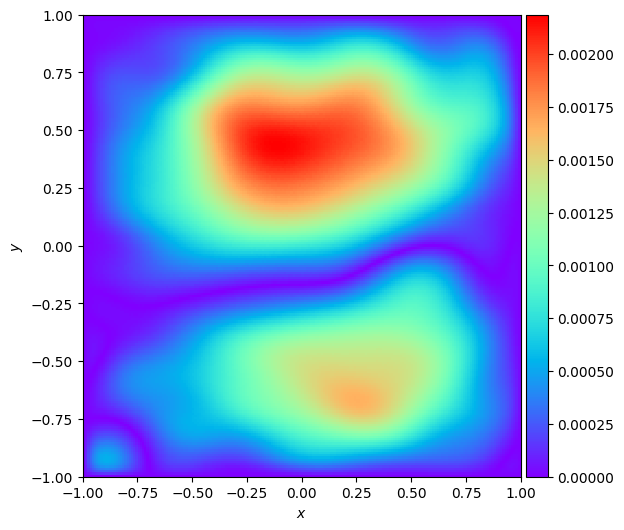}
    \caption{Absolute error of (\ref{sub@helmholtz1510var_pre})}
    \label{helmholtz1510var_error}
  \end{subfigure}
  \begin{subfigure}[b]{0.3\textwidth}
    \centering
    \includegraphics[width=\textwidth]{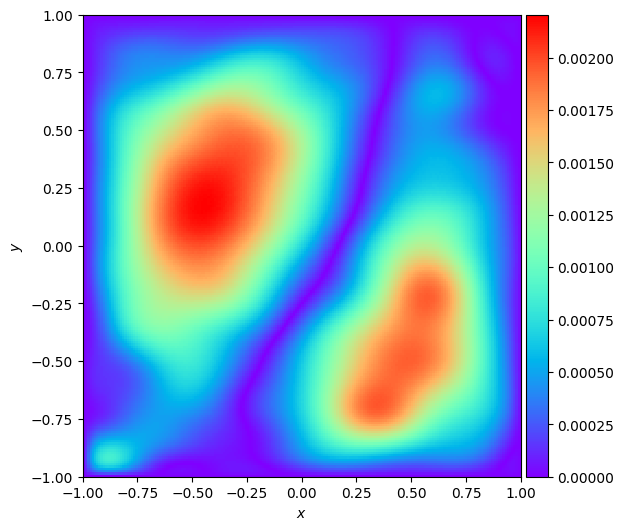}
    \caption{Absolute error of (\ref{sub@helmholtz2010var_pre})}
    \label{helmholtz2010var_error}
  \end{subfigure}
  \captionsetup{justification=centering}
  \caption{Some of the failed results.}
  \label{fig:Helmholtz failed}
\end{figure*}

  \begin{table*}[htbp]
    \centering
    % table caption is above the table
    \caption{Testing Error of Helmholtz Equation on Proposed Model($N_r = 500, \lambda = 1e-5$)}
    \label{tab:6}       % Give a unique label
    % For LaTeX tables use
    \tabcolsep=0.3cm
    \renewcommand\arraystretch{1.25}
    \begin{tabular}{c|ccc}
      \hline
      \diagbox{$\rho$}{RMSE}{$N_H$} & 15 & 20 & 30 \\
      \hline
      0.005 & 0.00132 & 0.00095 & 0.00111 \\
      % \hline
      0.01 & 0.00184 & 0.00195 & 0.00070 \\   % 注意方向参数是[]
      0.02 & 0.00045 & 0.00163 & 0.00134 \\ 
      \hline
      \end{tabular}
    \end{table*}

\section{Summary and Discussion}\label{Summary}
In this work, we incorporate regularity theory into neural network-based PDEs solver by introducing a novel loss function that includes the H\"older norm of the approximate solution as a regularization term. To compute the proposed regularization term, we design an approximation method and construct the corresponding model. The proposed model, along with the standard PINNs, are evaluated on several benchmark problems through extensive experiments with key hyperparameters. The results demonstrate that the proposed models achieve superior performance in solving elliptic PDEs.

Our integration of PDEs theory with neural network-based PDEs solver is not confined to elliptic partial differential equations. The framework is equally applicable to time-dependent parabolic equations and, more broadly, to any scenario where prior knowledge about the exact solution is available. Such knowledge can be effectively incorporated as a regularization term in the loss function, thereby enhancing the general applicability of the method. While this study focuses on elliptic equations as a representative case, extending the framework to other types of PDEs and incorporating more advanced theoretical insights will be the focus of future work.

\section*{CRediT authorship contribution statement}

\textbf{Qirui Zhou}: Conceptualization, Formal analysis, Methodology, Software, Validation, Visualization, Writing-original draft. \textbf{Jiebao Sun}: Conceptualization, Formal analysis, Methodology, Supervision, Validation, Funding acquisition.\textbf{Yi Ran}: Formal analysis, Investigation, Software, Validation, Writing-review and editing. \textbf{Boying Wu}: Conceptualization, Investigation, Supervision, Writing-review and editing.

\section*{Declaration of competing interest}
The authors declare that they have no known competing financial interests or personal relationships that could have appeared to influence the work reported in this paper.

\section*{Data availability}
Data will be made available on request.

\bibliographystyle{elsarticle-num} 
\bibliography{cas-refs}

\end{document}